\newtheorem{theorem}{Theorem}[section]
\newtheorem{lemma}[theorem]{Lemma}
\newtheorem{corollary}[theorem]{Corollary}
\newtheorem{conjecture}[theorem]{Conjecture}
\newtheorem{remark}[theorem]{Remark}
\newtheorem{example}[theorem]{Example}
\newtheorem{problem}[theorem]{Problem}
\theoremstyle{definition}
\theoremstyle{proof}
\theoremstyle{observation}
\begin{document}
  \setcounter{Maxaffil}{2}
   \title{On the spectrum and energy of Seidel matrix\\ for chain graphs}
   \author[,a]{Santanu Mandal\thanks{santanu.vumath@gmail.com}}
   \author[,a]{Ranjit Mehatari\thanks{ranjitmehatari@gmail.com, mehatarir@nitrkl.ac.in}}
   \author[,b]{ Kinkar Chandra Das\thanks{kinkardas2003@gmail.com}}
   \affil[a]{Department of Mathematics,}
   \affil[ ]{National Institute of Technology Rourkela,}
   \affil[ ]{Rourkela - 769008, India}
   \affil[ ]{ }
   \affil[b]{Department of Mathematics,}
   \affil[ ]{Sungkyunkwan University,}
   \affil[ ]{Suwon 16419, Republic of Korea}
   \maketitle

\begin{abstract}

We study various spectral properties of the Seidel matrix $S$ of a connected chain graph. We prove that $-1$ is always an eigenvalue of $S$ and all other eigenvalues of $S$ can have multiplicity at most two. We obtain the multiplicity of the Seidel eigenvalue $-1$, minimum number of distinct eigenvalues, eigenvalue bounds, characteristic polynomial, lower and upper bounds of Seidel energy of a chain graph. It is also shown that the energy bounds obtained here work better than the bounds conjectured by Haemers. We also obtain the minimal Seidel energy for some special chain graphs of order $n$. We also give a number of open problems.
\end{abstract}
\textbf{AMS Classification: } 05C50.\\
\textbf{Keywords: } Chain graph, Seidel matrix, Quotient matrix, Characteristic polynomial, Seidel energy.
\section{Introduction}
A chain graph can be defined in terms of induced forbidden subgraphs.
Let $C_n$ and $K_n$ denote the cycle and the complete graph on $n$ vertices, respectively. A graph with no induced subgraph isomorphic to $C_3,$ $C_5$ or $2K_2$ is called a chain graph.  Very interestingly a chain graph can be represented by a unique binary string (sequence). Starting from a single vertex, one can construct a chain graph  by repeated applications of following two operations:
\begin{enumerate}
\item
adding an isolated vertex.
\item
adding a vertex dominating to all previously added isolated vertices only.
\end{enumerate}

 We represent a chain graph $G$ on $n$ vertices using a binary string $b=\alpha_1\alpha_2\alpha_3 \cdots \alpha_n$ of length $n$,
where $\alpha_i =0$ if the vertex $v_i$ is added as an isolated vertex, and $\alpha_i =1$ if the vertex $v_i$ is dominating to all the previously added isolated vertices. As we start from a single vertex we always take $\alpha _1=0$, and since we are considering connected graphs only so $\alpha_n=1$. For any $n$ vertex chain graph, without loss of generality, we represent it by the binary string $b=0^{s_1} 1^{t_1} 0^{s_2} \ldots 0^{s_k} 1^{t_k}$, where $s_i,~t_i \geq 1$ and $\sum\limits^k_{i=1}\,(s_i+t_i)=n$. A chain graph is a bipartite graph, and it is complete bipartite if and only if $k=1$.
\medskip

Let $G$ be a simple, connected, finite graph with vertex set $V=\{v_1,v_2,\ldots,v_n\}$. Let $A$ be the $(0,1)$-adjacency  matrix \cite{Brouwer,Godsil} of $G$. Then the Seidel matrix \cite{Akbari 1,Akbari 2,AID,Berman,Aksari,Mandal, Oboudi 1,Oboudi 2,Ramane} is a square matrix of order $n$ defined by $$S=J-I-2A,$$
where $J$ is all $1$ matrix and $I$ is the identity matrix. In other words, if $s_{ij}$ is the $(i,j)$-th entry of $S$, then
$$s_{ij}=\begin{cases}
-1&\text{if }v_i\sim v_j,\\
1&\text{if }v_i\nsim v_j,\ i\neq j,\\
0&\text{if }i=j.
\end{cases}
$$

Obviously all eigenvalues of $S$ are real. We use $\lambda_1$ to denote the largest Seidel eigenvalue of graph $G$. The Seidel energy $SE(G)$ of a graph $G$ is defined as the sum of absolute values of the eigenvalues of Seidel matrix $S$. For several interesting properties of $SE(G)$, we refer \cite{Akbari 1,Ghorbani 1, Haemers, Oboudi 1,Oboudi 2}.\\

The spectral properties of adjacency and Laplacian matrix of chain graphs is well studied in literature \cite{AAD,Andelic 1,Andelic 2,Bell 1,Bell 2,Bhattacharya,Andelic 3, Das 1}. One of the most interesting property of chain graphs is that, among all connected bipartite graphs of prescribed order and size, the graph with minimal least eigenvalue (equivalently, the maximal spectral radius) is a chain graph. Around the same time but separately in $2008,$ Bhattacharya et al.~\cite{Bhattacharya} and Bell et al. \cite{Bell 1,Bell 2} first noticed this significant property of the adjacency matrix of a chain graph. After that chain graphs gained lot of focus for the past few years. The spectral properties of a chain graph have been extensively studied earlier with respect to adjacency matrix and Laplacian matrix. An\dj eli\'c et al. \cite{Andelic 1,Andelic 2,Andelic 3} published several papers related to spectra of chain graphs with key focus on eigenvalue location, characteristic polynomial and energy. Very recently, Mei et al. \cite{Mei} give some bounds of the energy and Laplacian energy of chain graphs.\\

 In this paper we consider the Seidel matrix  $S$ of a connected $n$-vertex chain graph $G$. The study on Seidel matrix of a chain graph have never been done before. Let $b=0^{s_1} 1^{t_1} 0^{s_2} \ldots 0^{s_k} 1^{t_k}$ be the binary representation of $G$. We produce a set of $n-2k+1$ orthogonal Seidel eigenvectors corresponding to the eigenvalue $-1$. We derive a formula for the Seidel characteristic polynomial of a chain graph in terms of its binary string. We see that determinant of symmetric tridiagonal matrix plays important role for evaluating the characteristic polynomial. As a consequence we obtain a formula for the determinant of the Seidel matrix of a chain graph.  More importantly we provide some interesting properties of Seidel energy of a chain graph. In \cite{Haemers}, Haemers proposed a conjecture  for any $n$ vertex graph $G$:
 \begin{lemma} {\rm\cite{Haemers}}
Let $G$ be a graph with $n$ vertices. Then $SE(G) \leq n\sqrt{n-1}$ with equality if and only if $G$ is a conference graph.
\end{lemma}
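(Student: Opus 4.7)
The plan is to deduce the bound $SE(G)\le n\sqrt{n-1}$ from a single trace computation combined with the Cauchy--Schwarz inequality, and then to read off the equality case from that same inequality. The first step is to evaluate $\mathrm{tr}(S^2)$. Since $S$ is symmetric with zero diagonal and $\pm 1$ entries off the diagonal, the $i$-th diagonal entry of $S^2$ is the sum of the squares of the $n-1$ off-diagonal entries of row $i$, which equals $n-1$. Hence
\[
\sum_{i=1}^{n}\lambda_i^2 \;=\; \mathrm{tr}(S^2) \;=\; n(n-1).
\]

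Next I would apply Cauchy--Schwarz to the vectors $(1,1,\ldots,1)$ and $(|\lambda_1|,\ldots,|\lambda_n|)$ in $\mathbb{R}^n$:
\[
SE(G) \;=\; \sum_{i=1}^{n}|\lambda_i| \;\le\; \sqrt{\,n\,\sum_{i=1}^{n}\lambda_i^2\,} \;=\; \sqrt{\,n\cdot n(n-1)\,} \;=\; n\sqrt{n-1},
\]
which is the claimed upper bound.

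For the equality characterization, the Cauchy--Schwarz equality condition forces $|\lambda_1|=|\lambda_2|=\cdots=|\lambda_n|$, and combining this with $\sum\lambda_i^2=n(n-1)$ gives $|\lambda_i|=\sqrt{n-1}$ for every $i$, equivalently $S^2=(n-1)I$. A symmetric $(0,\pm 1)$-matrix with zero diagonal satisfying $S^2=(n-1)I$ is precisely a symmetric conference matrix, and a graph whose Seidel matrix is a conference matrix is by definition a conference graph; this direction settles the ``only if''. Conversely, if $G$ is a conference graph then $S^2=(n-1)I$, so all $|\lambda_i|=\sqrt{n-1}$ and therefore $SE(G)=n\sqrt{n-1}$.

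The main obstacle I anticipate is essentially bookkeeping rather than a serious difficulty: the inequality is a one-line application of Cauchy--Schwarz once $\mathrm{tr}(S^2)$ is known, so all care must be spent on the equality case. The only subtle point is to match the spectral condition $S^2=(n-1)I$ with the definition of conference graph being used; if the paper prefers the strongly regular description of conference graphs (parameters $(4t+1,2t,t-1,t)$), a brief remark relating these two characterizations, together with the parity constraint on $n$ coming from $\mathrm{tr}(S)=0$, would complete the argument cleanly.
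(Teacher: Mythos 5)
The paper does not actually prove this statement; it is quoted verbatim from Haemers' paper \cite{Haemers} as a known result, so there is no in-paper argument to compare against. Your proof is the standard one and the inequality part is completely correct: $\mathrm{tr}(S^2)=n(n-1)$ because every off-diagonal entry of $S$ is $\pm1$, and Cauchy--Schwarz gives $SE(G)\leq\sqrt{n\cdot n(n-1)}=n\sqrt{n-1}$.

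The one genuine issue is in the equality case, and it is exactly the subtlety you flagged at the end --- but it cannot be repaired by ``a brief remark relating the two characterizations,'' because the two notions are not equivalent. Your spectral analysis correctly shows that equality forces $S^2=(n-1)I$, i.e.\ $S$ is a symmetric conference matrix (which in particular forces $n\equiv 2\pmod 4$, consistent with $\mathrm{tr}(S)=0$ splitting the eigenvalues $\pm\sqrt{n-1}$ evenly). That is precisely how Haemers states the equality condition. A conference graph in the strongly regular sense, with parameters $(4t+1,2t,t-1,t)$, lives on $n\equiv 1\pmod 4$ vertices and has Seidel spectrum $\{0,(\sqrt{n})^{(n-1)/2},(-\sqrt{n})^{(n-1)/2}\}$ (since $S\mathbf{1}=(n-1-2k)\mathbf{1}=0$ and $S=-I-2A$ on $\mathbf{1}^{\perp}$), hence Seidel energy $(n-1)\sqrt{n}<n\sqrt{n-1}$; such a graph does \emph{not} attain equality. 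So if ``conference graph'' is read in the strongly regular sense, the equality clause as stated in the paper is imprecise, and your proof correctly identifies the right condition; you should simply state equality as ``$S$ is a conference matrix'' (or ``$G$ is switching equivalent to a graph obtained from a conference matrix'') rather than trying to identify $S^2=(n-1)I$ with the SRG definition.
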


\begin{lemma}[\textbf{Haemers' Conjecture}]
Let $G$ be a graph with $n$ vertices. Then $SE(G) \geq 2(n-1)$ with equality if and only if $G$ is the complete graph $K_n$.
\end{lemma}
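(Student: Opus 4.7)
The equality case is routine: for $G = K_n$ one has $A(K_n) = J - I$, hence $S(K_n) = J - I - 2(J - I) = I - J$, whose eigenvalues are $1$ (with multiplicity $n-1$) and $1 - n$ (simple), yielding $SE(K_n) = (n-1) + (n-1) = 2(n-1)$. So equality holds for $K_n$; the substantive task is to prove the \emph{strict} inequality $SE(G) > 2(n-1)$ for every $G \not\cong K_n$.

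The natural starting point is the low-order moments of the Seidel spectrum $\lambda_1 \ge \cdots \ge \lambda_n$. Because $S$ has zero diagonal and $\pm 1$ off-diagonal entries, we get $\sum_i \lambda_i = \text{tr}(S) = 0$ and $\sum_i \lambda_i^2 = \text{tr}(S^2) = n(n-1)$. Setting $P := \sum_{\lambda_i > 0} \lambda_i$ and $N := -\sum_{\lambda_i < 0} \lambda_i$, the zero-trace identity forces $P = N$ and hence $SE(G) = 2P$, so the conjecture is equivalent to $P \ge n - 1$.

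My first attempt would be to read this off from the moment identities alone. The triangle inequality gives $\sum_{i < j} |\lambda_i \lambda_j| \ge \bigl|\sum_{i < j} \lambda_i \lambda_j\bigr| = \tfrac{1}{2}n(n-1)$ (using $(\sum \lambda_i)^2 = 0$), hence $SE(G)^2 \ge 2n(n-1)$. Unfortunately $\sqrt{2n(n-1)} < 2(n-1)$ for every $n \ge 3$, so a second-moment argument cannot close the gap. The next try would be to bring in the third moment $\text{tr}(S^3)$ (a signed count of triangle-type structures in $G$ and $\overline{G}$) or a lower bound on $|\det S| = \prod_i |\lambda_i|$ combined with AM--GM on the positive eigenvalues, in order to force $P \ge n-1$.

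Failing a pure-moment proof, I would switch to induction on $n$ via Cauchy interlacing: deleting a vertex $v$ produces a graph $G - v$ whose Seidel matrix is the corresponding principal $(n-1) \times (n-1)$ submatrix of $S(G)$, and its eigenvalues interlace those of $S(G)$. One would hope to pick $v$ cleverly and combine the inductive bound $SE(G - v) \ge 2(n-2)$ with a controlled increase $SE(G) \ge SE(G - v) + 2$, with equality tracking back inductively to force $G = K_n$. The \textbf{main obstacle} sits exactly here: Seidel energy is \emph{not} monotone under vertex deletion, and the increment $SE(G) - SE(G - v)$ can have either sign, so a purely local interlacing argument will not suffice. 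A successful proof must exploit a global combinatorial feature of every non-complete graph to guarantee a good choice of $v$ (or an averaging over all candidate vertices), which is the technical heart of the conjecture and the reason it resisted a clean proof for well over a decade.
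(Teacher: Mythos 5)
Your proposal is not a proof: it establishes only the easy half of the statement. The computation $S(K_n)=I-J$ with spectrum $\{1^{(n-1)},\,1-n\}$ and $SE(K_n)=2(n-1)$ is correct, as are the moment identities $\mathrm{tr}(S)=0$, $\mathrm{tr}(S^2)=n(n-1)$, the reduction to $P=\sum_{\lambda_i>0}\lambda_i\geq n-1$, and your observation that the second-moment bound $SE(G)\geq\sqrt{2n(n-1)}$ falls strictly short of $2(n-1)$ for $n\geq 3$. But the substantive claim --- the lower bound for every $G$ and the characterization of equality --- is never established; you list candidate strategies (third moments, $|\det S|$ plus AM--GM, interlacing induction) and then correctly explain why each one, as stated, does not close. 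Identifying obstacles is not the same as overcoming them, so the argument has a genuine and unfilled gap at exactly the point where the theorem lives.

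For context: the paper you are being compared against does not prove this statement either. It is imported as a known result, with the attribution that Ghorbani (2017) proved it partially --- his criterion is essentially your ``determinant plus AM--GM'' idea, namely that $|\det S|\geq n-1$ already forces $SE(G)\geq 2(n-1)$, which settles almost all graphs but not all --- and that Akbari, Einollahzadeh, Karkhaneei and Nematollahi (2020) completed the proof with a finer analysis of the positive part of the Seidel spectrum. So your instinct about where the difficulty sits is accurate, but if the task is to supply a proof of the lemma, neither your text nor the paper's does so from scratch; yours should at minimum be reframed as a reduction to the cited literature rather than presented as a proof attempt.
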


Ghorbani \cite{Ghorbani 1} proved this conjecture partially. Very recently Akbari et al.~\cite{Akbari 1} prove this conjecture.
Thus for a graph with $n$ vertices the known Seidel energy bound is given by
\begin{equation}
\label{Haemers}
 2(n-1)\leq SE(G)\leq n\sqrt{n-1}.
\end{equation}
Here we provide some new bounds for the Seidel energy of a chain graph. We also observe that the bounds obtained in this paper work better than the bounds in the Haemers' conjecture. We provide suitable examples to justify our results.\medskip

The contents of the paper is as follows: In Section $2$, we study few spectral properties of Seidel matrix and its quotient matrix. In particular, we prove that $-1$ is always an eigenvalue of $S$ and all other eigenvalues of the Seidel matrix can have multiplicity at most two. Moreover, we give a lower bound on the largest Seidel eigenvalue of chain graph in terms of order $n$ and $k$. In Section $3$, we obtain a formula for finding the characteristic polynomial and determinant of $S$. Section $4$ consists of Seidel energy of the chain graph. Here we obtain a better bound than Haemers' bound. Finally, we obtain the minimal Seidel energy for some special chain graphs of order $n$ in this section.


\section{Seidel eigenvalues of chain graphs}
In this section, we discuss some properties on the eigenvalues of the quotient matrix corresponding to an equitable partition of the Seidel matrix. Using these properties we obtain the multiplicity of the Seidel eigenvalue $-1$. Moreover, we prove that the spectrum (distinct eigenvalues) of the Seidel matrix of a chain graph is equal to the spectrum of that quotient matrix. In particular, if $\lambda$ is an eigenvalue which is not an eigenvalue of that quotient matrix, then $\lambda=-1$.
\subsection{Quotient Matrix}
Let us consider a chain graph $G$ with the binary string $b=0^{s_1} 1^{t_1} 0^{s_2} \ldots 0^{s_k} 1^{t_k}$. Then the  Seidel matrix $S$ of $G$ is a square matrix of size $n$, given by

$$S=\begin{bmatrix}
(J-I)_{s_1}&-J_{s_1\times t_1}&J_{s_1\times s_2}&-J_{s_1\times t_2}&J_{s_1\times s_3}&-J_{s_1\times t_3}&\ldots&-J_{s_1\times t_k}\\[1mm]
-J_{t_1\times s_1}&(J-I)_{t_1}&J_{t_1\times s_2}&J_{t_1\times t_2}&J_{t_1\times s_3}&J_{t_1\times t_3}&\ldots&J_{t_1\times t_k}\\[1mm]
J_{s_2\times s_1}&J_{s_2\times t_1}&(J-I)_{s_2}&-J_{s_2\times t_2}&J_{s_2\times s_3}&-J_{s_2\times t_3}&\ldots&-J_{s_2\times t_k}\\[1mm]
-J_{t_2\times s_1}&J_{t_2\times t_1}&-J_{t_2\times s_2}&(J-I)_{t_2}&J_{t_2\times s_3}&J_{t_2\times t_3}&\ldots&J_{t_2\times t_k}\\[1mm]
& & & & & & \ddots \\[1mm]
J_{s_k\times s_1}&J_{s_k\times t_1}&J_{s_k\times s_2}&J_{s_k\times t_2}&J_{s_k\times s_3}&J_{s_k\times t_3}&\ldots &-J_{s_k\times t_k}\\[1mm]
-J_{t_k\times s_1}&J_{t_k\times t_1}&-J_{t_k\times s_2}&J_{t_k\times t_2}&-J_{t_k\times s_3}&J_{t_k\times t_3}&\ldots&(J-I)_{t_k}
\end{bmatrix},$$
where  $J_{m\times n}$ is all 1 block matrix of size $m\times n$ and the diagonal blocks of $S$ are the square matrices of size $s_1\times s_1,~t_1\times t_1,~s_2\times s_2,~t_2\times t_2,~\ldots,~t_k\times t_k$. \\

We now construct an equitable partition in the following way. For the representation $b=0^{s_1} 1^{t_1} 0^{s_2} \ldots 0^{s_k} 1^{t_k}$ of $G$, let $V_{s_1}$ denote the set of vertices representing first $s_1$ added vertices, $V_{t_1}$ denote the set of vertices representing next $t_1$ added vertices, $V_{s_2}$ denote the set of vertices representing next $s_2$ added vertices and finally $V_{t_k}$ denote the set of vertices representing last $t_k$ added vertices. Now we consider the vertex partition of $\pi =\{ C_1,C_2,C_3,\ldots,C_{2k}\}$, where   $C_i=V_{s_j}=s_j$, if $i=2j-1$ $(j=1,\,2,\ldots,\,k)$, \text{and} $C_i=V_{t_j}=t_j$, if $i=2j$ $(j=1,\,2,\ldots,\,k)$. Then $\pi$ is an equitable partition of $G$ of size $2k$. Throughout this paper we consider this equitable partition only, and by writing ``$G$ is a chain graph of order $n$ with $|\pi|=2k$'' we mean that $G$ is a chain graph graph of order $n$ with the binary string $b=0^{s_1} 1^{t_1} 0^{s_2} \ldots 0^{s_k} 1^{t_k}$. Let $Q_S$ be the corresponding quotient matrix. Then  $Q_S$ is a square matrix of size $2k$, given by
 $$Q_S=\begin{bmatrix}
(s_1 -1) & -t_1 &s_2  &-t_2&s_3  &-t_3& \ldots & -t_k \\[1mm]
-s_1&(t_1 -1) & s_2 &t_2&s_3&t_3& \ldots & t_k\\[1mm]
s_1 &t_1&(s_2-1)&-t_2 &s_3 &-t_3& \ldots & -t_k\\[1mm]
-s_1&t_1&-s_2&(t_2 -1)&s_3&t_3&\ldots&t_k\\[1mm]
s_1&t_1&s_2&t_2&(s_3-1)&-t_3&\ldots&-t_k\\[1mm]
-s_1&t_1&-s_2&t_2&-s_3&(t_3-1)&\ldots& t_k\\[1mm]
& & & & & & \ddots \\[1mm]
s_1&t_1&s_2&t_2&s_3&t_3&\ldots&-t_k \\[1mm]
-s_1 &  t_1 &-s_2 &t_2 &-s_3 &t_3& \ldots & (t_k -1)
\end{bmatrix}.$$

Let $D=diag[s_1,t_1,s_2,t_2,\ldots,s_k,t_k]$. Then $Q_S$ is similar to the symmetric matrix $D^{\frac{1}{2}}Q_SD^{-\frac{1}{2}}$, thus $Q_S$ is diagonalizable.
Let $\lambda$ be an eigenvalue of $Q_S$ with corresponding eigenvector $X\in\mathbb{R}^{2k}$. Let $P$ be the characteristic matrix (of order $n\times 2k$) for the equitable partition $\pi=\{C_1,C_2,\ldots,C_{2k}\}$, i.e., $(i,j)$-th entry of $P$ is 1 if $i\in C_j$ and 0 otherwise. Then it is easy to verify that $SP=PQ_S$. Then $S(PX)=\lambda (PX)$. Which implies that every eigenvalue of $Q_S$ is also an eigenvalue of $S$. It is easy to verify that trace of the quotient matrix $Q_S$ is $n-2k$. Then trace of $Q_S^2$ can be obtained  by the formula $$trace\ Q_S^2=(trace\  Q_S)^2-2\sum_{i<j}\lambda_i\lambda_j,$$
where $\lambda_i$'s are eigenvalues of $Q_S$. We obtain the following:
\begin{lemma}
\label{Chain_Quotient_Lm1}
Let $G$ be an chain graph of order $n$ with $|\pi|=2k$. Then
$$trace\ Q_S^2=n^2-2n+2k. $$
\end{lemma}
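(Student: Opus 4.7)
The plan is a direct entry-wise computation of $\operatorname{trace} Q_S^2$ from the block structure, bypassing the eigenvalue identity mentioned in the paragraph preceding the lemma (which would require also computing $\sum_{i<j}\lambda_i\lambda_j$).

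First I would label the $2k$ cells of the equitable partition $\pi$ by their sizes $d_1,d_2,\ldots,d_{2k}$, where $d_{2j-1}=s_j$ and $d_{2j}=t_j$, so that $\sum_{i=1}^{2k} d_i=n$. The key observation is that because $\pi$ is equitable, every off-diagonal block of $S$ is of the form $\varepsilon_{ij}J_{d_i\times d_j}$ with $\varepsilon_{ij}\in\{+1,-1\}$ (read directly from the block display of $S$), while each diagonal block is $(J-I)_{d_i}$. Consequently the entries of $Q_S$ are
\[
(Q_S)_{ij}=\varepsilon_{ij}\,d_j \quad (i\neq j),\qquad (Q_S)_{ii}=d_i-1.
\]

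Next I would expand the trace by the standard formula
\[
\operatorname{trace} Q_S^2 \;=\; \sum_{i=1}^{2k}(Q_S)_{ii}^{\,2} \;+\; \sum_{i\neq j}(Q_S)_{ij}(Q_S)_{ji}.
\]
The crucial simplification, and the reason the computation is short, is that $\varepsilon_{ji}=\varepsilon_{ij}$ (the sign is a property of the unordered pair of cells), so
\[
(Q_S)_{ij}(Q_S)_{ji}=\varepsilon_{ij}^{\,2}\,d_id_j=d_id_j,
\]
independently of the binary string. This means the specific pattern of $\pm 1$ signs in $Q_S$ never needs to be tracked.

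With that, substituting and simplifying:
\[
\operatorname{trace} Q_S^2 = \sum_{i=1}^{2k}(d_i-1)^2 + \sum_{i\neq j} d_i d_j
= \sum_{i=1}^{2k} d_i^2 - 2\sum_{i=1}^{2k} d_i + 2k + \Bigl(\sum_{i=1}^{2k} d_i\Bigr)^2 - \sum_{i=1}^{2k} d_i^2,
\]
and using $\sum_i d_i=n$ the $\sum d_i^2$ terms cancel, leaving $n^2-2n+2k$, as required. The only real step is recognising that the sign matrix squared is the all-ones matrix, which I would flag explicitly; everything else is bookkeeping.
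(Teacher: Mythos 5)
Your proof is correct and follows essentially the same route as the paper: both compute $\operatorname{trace}Q_S^2$ entry-wise as $\sum_i (Q_S^2)_{ii}$, using that each off-diagonal product $(Q_S)_{ij}(Q_S)_{ji}$ equals $d_id_j$ because the sign pattern is symmetric (the paper does this implicitly, writing the row sum for the cell of size $s_i$ as $(s_i-1)^2+s_i(n-s_i)$ and likewise for $t_i$). Your unified notation $d_1,\ldots,d_{2k}$ and explicit remark about $\varepsilon_{ij}^2=1$ merely streamline the same bookkeeping.
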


\begin{proof} Let $\lambda_1,\lambda_2,\ldots,\lambda_{2k}$ be eigenvalues of $Q_S$. Since $\sum\limits^k_{i=1}\,(s_i+t_i)=n$, we obtain
\begin{align*}
trace\ Q_S^2&=\sum_{i=1}^{2k}\lambda_i^2\\
&=\sum\limits^k_{i=1}\,\Big[(s_i-1)^2+s_i\,\Big(\sum\limits^k_{j=1,j\neq i}\,s_j+\sum\limits^k_{j=1}\,t_j\Big)\Big]+\sum\limits^k_{i=1}\,\Big[(t_i-1)^2+t_i\,\Big(\sum\limits^k_{j=1}\,s_j+\sum\limits^k_{j=1,j\neq i}\,t_j\Big)\Big]\\
&=\sum\limits^k_{i=1}\,\Big[(s_i-1)^2+s_i\,(n-s_i)\Big]+\sum\limits^k_{i=1}\,\Big[(t_i-1)^2+t_i\,(n-t_i)\Big]\\
&=\sum\limits^k_{i=1}\,\Big[s_i\,(n-2)+t_i\,(n-2)+2\Big]=n^2-2n+2k.
\end{align*}
\end{proof}

\begin{theorem}
\label{Chain_quotient_th1}
Let $b=0^{s_1} 1^{t_1} 0^{s_2} \ldots 0^{s_k} 1^{t_k}$ be the binary string of a chain graph $G$. Then $-1$ is a simple eigenvalue of $Q_S$.
\end{theorem}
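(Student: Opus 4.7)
My plan is to compute $\ker(Q_S+I)$ directly and show it is one-dimensional. Since $Q_S$ is diagonalizable (as observed in the paragraph preceding this theorem), the geometric and algebraic multiplicities of the eigenvalue $-1$ agree, so a one-dimensional kernel of $Q_S+I$ is exactly what simplicity requires.

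Write a candidate kernel vector as $x=(x_1^s,x_1^t,x_2^s,x_2^t,\ldots,x_k^s,x_k^t)^{T}$, where $x_j^s$ is the $V_{s_j}$-coordinate and $x_j^t$ is the $V_{t_j}$-coordinate. Using the adjacency rule $V_{s_i}\sim V_{t_j}$ iff $i\leq j$, the $(2j-1)$-th row of $(Q_S+I)x=0$ reads
\[
\sum_{i=1}^{k} s_i\,x_i^s \;+\; \sum_{i<j} t_i\,x_i^t \;-\; \sum_{i\geq j} t_i\,x_i^t \;=\; 0,
\]
and the $(2j)$-th row reads
\[
-\sum_{i\leq j} s_i\,x_i^s \;+\; \sum_{i>j} s_i\,x_i^s \;+\; \sum_{i=1}^{k} t_i\,x_i^t \;=\; 0.
\]

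Subtracting the $(2j-1)$-th from the $(2j+1)$-th equation telescopes down to $2 t_j\,x_j^t=0$, so $x_j^t=0$ for $j=1,\ldots,k-1$. Symmetrically, subtracting the $(2j)$-th from the $(2j+2)$-th yields $-2 s_{j+1}\,x_{j+1}^s=0$, so $x_j^s=0$ for $j=2,\ldots,k$. Substituting these vanishings into, say, the first equation collapses it to $s_1 x_1^s - t_k x_k^t=0$, and a quick check confirms that every remaining equation reduces to this same single relation. Therefore $\ker(Q_S+I)$ is one-dimensional, spanned by $v:=t_k\,e_1 + s_1\,e_{2k}$, which is nonzero since $s_1,t_k\geq 1$. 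Hence $-1$ is a simple eigenvalue of $Q_S$.

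The main obstacle is simply keeping the sign bookkeeping straight: one must correctly identify, in each row of $Q_S+I$, which $s$- and $t$-columns carry a minus sign, so that the telescoping differences between consecutive pairs of rows isolate a single coordinate with coefficient $2t_j$ or $-2 s_{j+1}$. Once the sign pattern is extracted correctly from the adjacency rule, the argument above is a few lines and completely mechanical.
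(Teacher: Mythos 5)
Your proposal is correct, and I checked the sign bookkeeping: with the adjacency rule $V_{s_i}\sim V_{t_j}$ iff $i\leq j$, the row of $Q_S+I$ indexed by $V_{s_j}$ carries $-t_i$ exactly for $i\geq j$ and the row indexed by $V_{t_j}$ carries $-s_i$ exactly for $i\leq j$, so the consecutive differences do collapse to $2t_j x_j^t=0$ and $-2s_{j+1}x_{j+1}^s=0$, and every surviving equation reduces to $s_1x_1^s=t_kx_k^t$. The overall strategy is the same as the paper's --- identify $\ker(Q_S+I)$ and invoke the diagonalizability of $Q_S$ (via similarity to $D^{1/2}Q_SD^{-1/2}$) to pass from geometric to algebraic multiplicity --- but you actually do more than the paper: the paper only exhibits the single eigenvector $[\,t_k\ 0\ \cdots\ 0\ s_1\,]^T$ and asserts that ``clearly'' the geometric multiplicity is one, which on its face only shows the multiplicity is at least one. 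Your telescoping computation supplies the missing upper bound by showing the kernel is exactly one-dimensional, spanned by that same vector, so your write-up is a completed version of the paper's sketch rather than a different route. (As a sanity check, your argument also degenerates correctly when $k=1$: there are no telescoping pairs, and the two rows both reduce to $s_1x_1^s=t_1x_1^t$.)
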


\begin{proof}
The eigenvector corresponding to the eigenvalue $-1$ is $X=\left[\begin{array}{ccccccc}
t_k&0&0&0&\cdots&0&s_1
\end{array}\right]^T$. Clearly the geometric multiplicity of the eigenvalue $-1$ is one. $Q_S$ being diagonalizable, the algebraic multiplicity of the eigenvalue $-1$ is also one. Hence the result follows.
\end{proof}



\begin{theorem}
\label{Chain_quotient_th2}
The eigenvalues of  $Q_S$ can have multiplicity at most two.
\end{theorem}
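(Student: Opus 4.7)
The plan is to reduce the eigenvalue equation $Q_S X = \lambda X$ to a two-term recurrence in a pair of scalars indexed by $j = 1, \ldots, k$, and then bound the dimension of the space of initial data. I would write the putative eigenvector in the block-grouped form $X = (x_1, y_1, x_2, y_2, \ldots, x_k, y_k)^T$, where $x_j$ and $y_j$ are the coordinates associated with the cells $V_{s_j}$ and $V_{t_j}$, and introduce the global weighted sums $S_x = \sum_{i=1}^k s_i x_i$ and $S_y = \sum_{i=1}^k t_i y_i$.

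First I would write out rows $2j-1$ and $2j$ of $Q_S X = \lambda X$ directly from the explicit form of $Q_S$, splitting each row according to whether the column index satisfies $i < j$, $i = j$, or $i > j$. Absorbing the off-block contributions into $S_x \pm S_y$, the equations collapse, for each $j = 1, \ldots, k$, to
\begin{equation*}
(\lambda+1)\, x_j \;=\; (S_x - S_y) + 2\sum_{i<j} t_i y_i, \qquad (\lambda+1)\, y_j \;=\; (S_x + S_y) - 2\sum_{i\leq j} s_i x_i.
\end{equation*}
Subtracting the equations for consecutive values of $j$ eliminates the global quantities and leaves the two-term recurrences
\begin{equation*}
(\lambda+1)(x_{j+1} - x_j) \;=\; 2\, t_j y_j, \qquad (\lambda+1)(y_{j+1} - y_j) \;=\; -\,2\, s_{j+1} x_{j+1}, \qquad j = 1, \ldots, k-1,
\end{equation*}
which, together with the initial equations $(\lambda+1)x_1 = S_x - S_y$ and $(\lambda+1)y_1 = S_x + S_y - 2 s_1 x_1$ (the $j=1$ cases above), are equivalent to $Q_S X = \lambda X$.

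Now I would set $\mu = \lambda + 1$. Theorem~\ref{Chain_quotient_th1} already handles $\mu = 0$, so assume $\mu \neq 0$. Then each recurrence uniquely determines $(x_{j+1}, y_{j+1})$ from $(x_j, y_j)$, so the whole eigenvector $X$ is a linear image of the pair $(x_1, y_1) \in \mathbb{R}^2$. Consequently the $\lambda$-eigenspace of $Q_S$ embeds linearly into $\mathbb{R}^2$ and therefore has dimension at most two, which is exactly the theorem. I do not anticipate any conceptual obstacle; the only real work is the algebraic bookkeeping of signs and index ranges required to bring the rows into the compact form displayed above. Once that is done, the recurrence structure and the bound ``at most two'' follow immediately, and this same reduction foreshadows the symmetric tridiagonal determinant that the introduction promises will govern the characteristic polynomial.
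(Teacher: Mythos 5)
Your proposal is correct and is essentially the paper's own argument: the paper likewise subtracts rows of $Q_SX=\lambda X$ (the $j$-th from the first, rather than consecutive ones) to show that for $\lambda\neq-1$ every coordinate $x_i$ with $i\geq 3$ is a fixed linear combination $a_ix_1+b_ix_2$ of the first two, and then concludes that the eigenspace has dimension at most two. Your formulation via the two-term recurrence in $(x_j,y_j)$ and injectivity of the map $X\mapsto(x_1,y_1)$ is just a cleaner packaging of the same reduction, with the $\lambda=-1$ case handled by Theorem \ref{Chain_quotient_th1} exactly as in the paper.
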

\begin{proof}
Suppose $\lambda$ is an eigenvalue of $Q_S$. Let $X=\left[\begin{array}{ccccc}
x_1&x_2&x_3&\cdots&x_{2k}
\end{array}\right]^T$ be an eigenvector corresponding to $\lambda$. We already proved that $\lambda=-1$ is a  simple eigenvalue. Now we have to prove the theorem for $\lambda \neq -1$.
Then from the relation $Q_SX=\lambda X$, we have $2k$ linear equations given by
\begin{equation}
\tag{A}
\begin{aligned}
(s_1 -1)x_1-t_1x_2+s_2x_3 -t_2x_4+\ldots-t_kx_{2k}&=\lambda x_1,\\
-s_1x_1+(t_1-1)x_2+s_2x_3+t_2x_4+\ldots t_kx_{2k}&=\lambda x_2,\\
s_1x_1+t_1x_2 +(s_2-1)x_3-t_2x_4+\ldots -t_kx_{2k}&=\lambda x_3,\\
-s_1x_1+t_1x_2-s_2x_3+(t_2-1)x_4+\ldots +t_kx_{2k}&=\lambda x_4,\\
s_1x_1+t_1x_2+s_2x_3+t_2x_4+\ldots -t_kx_{2k}&=\lambda x_5,\\
-s_1x_1+t_1x_2-s_2x_3+t_2x_4+\ldots +t_kx_{2k}&=\lambda x_6,\\
\ldots~~\ldots~~\ldots~~\ldots~~~\ldots~~\ldots~~\ldots&\vdots\\
s_1x_1+t_1x_2+s_2x_3+t_2x_4+\ldots -t_kx_{2k}&=\lambda x_{2k-1},\\
-s_1x_1+t_1x_2-s_2x_3+t_2x_4+\ldots +(t_k-1)x_{2k}&=\lambda x_{2k}.
\end{aligned}
\end{equation}

Now for odd $j\geq3$, subtracting $j$-th equation from first equation, we get,
\begin{equation}
\label{Chain_multi_eq1}
 x_j=x_1+\frac{2t_1}{1+\lambda}x_2+\frac{2t_2}{1+\lambda}x_4+\cdots +\frac{2t_{\frac{j-1}{2}}}{1+\lambda}x_{j-1},
\end{equation}
and for even $i\geq4$, subtracting $j$-th equation from first equation, we get
\begin{equation}
\label{Chain_multi_eq2}
x_i=x_2-\frac{2s_2}{1+\lambda}x_3-\frac{2s_3}{1+\lambda}x_5-\cdots -\frac{2s_{\frac{i}{2}}}{1+\lambda}x_{i-1}.
\end{equation}
Putting $j=3$ in (\ref{Chain_multi_eq1}), we get
 $$x_3=x_1+\frac{2t_1}{1+\lambda}x_2=a_3x_1+b_3x_2, \text{(say)}.$$
Putting $i=4$ in (\ref{Chain_multi_eq2}) and using the expression of $x_3$, we get
 $$x_4=x_2-\frac{2s_2}{1+\lambda}x_3=x_2-\frac{2s_2}{1+\lambda}(a_3x_1+b_3x_2)=a_4x_1+b_4x_2, \text{(say)}.$$
Putting $j=5$ in (\ref{Chain_multi_eq1}) and using the expression of $x_4$, we get
 $$x_5=x_1+\frac{2t_1}{1+\lambda}x_2+\frac{2t_2}{1+\lambda}x_4=a_5x_1+b_5x_2, (say).$$
Putting $i=6$ in (\ref{Chain_multi_eq2}) and using the expressions of $x_3$ and $x_5$, we get
 $$x_6=x_2-\frac{2s_2}{1+\lambda}x_3-\frac{2s_3}{1+\lambda}x_5=a_6x_1+b_6x_2, (say).$$
Proceeding in this way, iteratively, we can obtain real numbers $a_3,a_4, \ldots a_{2k}$ and $b_3, b_4, \ldots b_{2k}$ such that
\begin{equation}
\label{Chain_multi_eq3}
 x_i=a_ix_1+b_ix_2~~~\mbox{ for }3\leq i\leq 2k.
\end{equation}

\vspace*{3mm}

Assume to the contrary that the multiplicity of the eigenvalue $\lambda\,(\neq -1)$ of $Q_S$ is at least three. Then we can assume that $Y=\left[\begin{array}{cccccc}
0&0&y_3&y_4&\cdots&y_{2k}
\end{array}\right]^T$ is an eigenvector corresponding to $\lambda$ of $Q_S$. By (\ref{Chain_multi_eq3}), we obtain $y_3=y_4=\cdots=y_{2k}=0$, that is, $Y={\bf 0}$, a contradiction as $Y$ is an eigenvector. This completes the proof of the theorem.
\end{proof}

 Since $-1$ is a simple eigenvalue of $Q_S$, therefore among $(2k-1)$ remaining eigenvalues, the minimum number of distinct eigenvalues is $(\frac{2k-2}{2}+1)$.
 As a consequence of this fact we obtain here a straightforward result which counts the minimum number of distinct eigenvalues of $Q_S$.
\begin{theorem}
\label{Chain_quotient_th3}
$Q_S$ has at least $(k+1)$ distinct eigenvalues.
\end{theorem}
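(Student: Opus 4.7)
My plan is to derive this as a short counting/pigeonhole consequence of Theorems~\ref{Chain_quotient_th1} and~\ref{Chain_quotient_th2}. The matrix $Q_S$ has order $2k$, so it carries $2k$ eigenvalues counted with algebraic multiplicity (which, since $Q_S$ is diagonalizable, equals the geometric multiplicity).

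First I would single out $-1$. By Theorem~\ref{Chain_quotient_th1}, $-1$ is a simple eigenvalue, so exactly one of the $2k$ eigenvalues equals $-1$ and the remaining $2k-1$ eigenvalues are all different from $-1$. Next I would invoke Theorem~\ref{Chain_quotient_th2}: every eigenvalue of $Q_S$ other than $-1$ has multiplicity at most two. Therefore the $2k-1$ remaining eigenvalues (counted with multiplicity) are distributed among some distinct values, each appearing at most twice, so the number of distinct values among them is at least
\[
\left\lceil \frac{2k-1}{2} \right\rceil = k.
\]
Adding back the eigenvalue $-1$, which is distinct from all of these, gives at least $k+1$ distinct eigenvalues in total.

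The argument is essentially pigeonhole and there is no real obstacle; the only thing to be careful about is confirming that the $2k-1$ eigenvalues we are bundling are genuinely different from $-1$, which is exactly what simplicity of $-1$ gives us. So the proof is just a two-line assembly of the previous two theorems.
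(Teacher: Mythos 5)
Your argument is correct and is exactly the one the paper uses: the simplicity of $-1$ (Theorem~\ref{Chain_quotient_th1}) together with the multiplicity bound of two for all other eigenvalues (Theorem~\ref{Chain_quotient_th2}) forces at least $\lceil (2k-1)/2\rceil = k$ distinct values among the remaining eigenvalues, hence $k+1$ in total. No differences worth noting.
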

  The quotient matrix  $Q_S$ has $2k$ eigenvalues and with at least $k+1$ distinct eigenvalues. We did not find any chain graph for which the number of distinct positive and the distinct negative eigenvalues of $S$ are unequal.

 \begin{problem}
 Is there exists a chain graph for which the number of positive eigenvalues of $Q_S$ is not equal to the number of negative eigenvalues?
 \end{problem}
\subsection{Eigenvalue bounds and multiplicity of the eigenvalue $-1$ }
Let us consider a chain graph $G$ with the binary string $0^{s_1} 1^{t_1} 0^{s_2} \ldots 0^{s_k} 1^{t_k}$. Let $n_{-1}(S)$ denote the multiplicity of the eigenvalue $-1$ of the matrix $S$.  We now derive a formula for  $n_{-1}(S)$. For this first we construct eigenvectors corresponding to $-1$ which does not belong to spectrum of $Q_S$.
\medskip

For $i>1$, we define a set $\{E_j^i\}$ of $i-1$ orthogonal row-vectors in $\mathbb{R}^i$ by
$$E_j^i=\textbf{\emph{e}}_1(i)+\textbf{\emph{e}}_2(i)+\cdots+\textbf{\emph{e}}_{j}(i)-j\textbf{\emph{e}}_{j+1}(i)\ \forall 1\leq j\leq i-1,$$
where the row vector $\textbf{\emph{e}}_{j}(i)$ represents the $j$-th standard basis element of $\mathbb{R}^i$. \\

Now for each $s_i\geq 2$, we define
$$X_{s_i}(j)=[O_{s_1}\ O_{t_1}\ \cdots\ O_{t_{i-1}}\ E_j^{s_i} \ O_{t_{i}}\ \cdots\ O_{t_k}]^T,\ \  1\leq j \leq s_i-1,\,1\leq i\leq k,$$
where row-vector $O_r$ denote the zero vector in $\mathbb{R}^r$.
Then the set $\{X_{s_i}(1),X_{s_i}(2),\ldots, X_{s_i}(s_i-1)\}$ is orthogonal and $SX_{s_i}(j)=-X_{s_i}(j)$ for all $1\leq j\leq s_i-1$. There for each $s_i>1$ $(1\leq i\leq k)$, the constructed set contains $s_i-1$ orthogonal eigenvectors corresponding to $-1$.\\

Again for each $t_i\geq 2$, define
$$Y_{t_i}(j)=[O_{s_1}\ O_{t_1}\ \cdots\ O_{s_{i}}\ E_j^{t_i} \ O_{s_{i+1}}\ \cdots\ O_{t_k}]^T,\ \ 1\leq j \leq t_i-1,\,1\leq i\leq k.$$
As above, for each $t_i>1$ $(1\leq i\leq k)$, the set $\{Y_{t_i}(1),Y_{t_i}(2),\ldots, Y_{t_i}(t_i-1)\}$ contains $t_i-1$ orthogonal eigenvectors corresponding to $-1$.\\

Each of $X_{s_i}(j)$'s and $Y_{t_i}(j)$'s has column sum zero in each of the vertex partition. Now let $\lambda$ be an eigenvalue of $Q_S$ with eigenvector $X =\left[\begin{array}{ccccc}
x_1&x_2&x_3&\cdots&x_{2k}
\end{array}\right]^T\in\mathbb{R}^{2k}$. Then $PX$ is an eigenvector corresponding to the eigenvalue $\lambda$ for the Seidel matrix S and the $j$-th entry of the eigenvector is $x_i$ for every $j\in C_i$, i.e., the eigenvector $PX$ corresponding to the eigenvalue $\lambda$ is constant in each vertex set in $\pi$. Therefore $PX$ is orthogonal to each of these $X_{s_i}(j)$'s and $Y_{t_i}(j)$'s. Using this fact we now compute the multiplicity of the eigenvalue $-1$.

\begin{theorem}
\label{Chain_pm1_Ch1} Let $G$ be a chain graph of order $n$ with $|\pi|=2k$. Then
$$n_{-1}(S)= n-2k+1.$$
\end{theorem}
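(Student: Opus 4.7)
The plan is to prove the claim by producing an orthogonal decomposition $\mathbb{R}^n = U_1 \oplus U_2$ into $S$-invariant subspaces on which the multiplicity of $-1$ can be read off directly, and then adding up the contributions. Let $P$ be the characteristic matrix of $\pi$ introduced earlier. Set $U_1 = \mathrm{Im}\,P$ (the space of vectors that are constant on each cell of $\pi$), which has dimension $2k$, and let $U_2 = U_1^{\perp}$, of dimension $n-2k$. Since $SP = PQ_S$, the subspace $U_1$ is $S$-invariant, and because $S$ is symmetric, $U_2$ is also $S$-invariant. The action of $S$ on $U_1$ is similar to $Q_S$, so by Theorem~\ref{Chain_quotient_th1} the eigenvalue $-1$ contributes exactly multiplicity $1$ from $U_1$.

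The heart of the argument is to show that $S$ acts as $-I$ on $U_2$, so that $-1$ contributes multiplicity $n-2k$ from $U_2$. Note that $U_2$ is spanned by vectors $w$ that are supported on a single cell $C \in \pi$ and sum to zero on $C$: indeed, the vectors $X_{s_i}(j)$ and $Y_{t_i}(j)$ constructed before the theorem are exactly such vectors, are orthogonal, and together number $\sum_{i=1}^k (s_i-1) + \sum_{i=1}^k (t_i-1) = n-2k = \dim U_2$, hence form a basis of $U_2$. For any such $w$ supported on a cell $C$ with $\sum_{u \in C} w_u = 0$, I would compute $Sw$ componentwise. For $v \notin C$, the equitable property makes the entries $s_{vu}$ constant in $u \in C$, so $(Sw)_v = s_{vu_0}\sum_{u \in C} w_u = 0$. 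For $v \in C$, the diagonal block of $S$ on $C$ is $J-I$, so
\begin{equation*}
(Sw)_v \;=\; \sum_{u \in C,\, u \neq v} w_u \;=\; \Bigl(\sum_{u \in C} w_u\Bigr) - w_v \;=\; -w_v.
\end{equation*}
Hence $Sw = -w$ for every basis vector of $U_2$, so $S|_{U_2} = -I$ and $-1$ has multiplicity exactly $n-2k$ coming from $U_2$.

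Adding the two contributions gives $n_{-1}(S) = 1 + (n-2k) = n - 2k + 1$, which is the claimed formula. The main technical point is the identification of $U_2$ with the span of the already-constructed zero-sum cell vectors together with the per-cell computation $(Sw)_v = -w_v$, which relies on the diagonal blocks of $S$ being $J-I$; everything else is a routine application of the equitable partition formalism combined with Theorem~\ref{Chain_quotient_th1}. I do not anticipate any real obstacle beyond writing the block-calculation cleanly.
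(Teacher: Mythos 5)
Your proposal is correct and follows essentially the same route as the paper: both arguments rest on the zero-sum cell vectors $X_{s_i}(j)$, $Y_{t_i}(j)$ (which span $(\mathrm{Im}\,P)^{\perp}$ and are eigenvectors for $-1$) together with the simplicity of $-1$ as an eigenvalue of $Q_S$ on $\mathrm{Im}\,P$. Your explicit invariant-subspace decomposition $\mathbb{R}^n=U_1\oplus U_2$ with $S|_{U_2}=-I$ merely makes the ``exactly'' direction more transparent than the paper's terser concluding sentence, but it is the same underlying argument.
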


\begin{proof}
We already observed that, if $s_i\geq 2$ $(1\leq i\leq k)$, then the set $\{X_{s_i}(1),X_{s_i}(2),\ldots, X_{s_i}(s_i-1)\}$ contains $s_i-1$ orthogonal eigenvectors corresponding to $-1$. Now for $s_{\ell}, \ s_m\geq2$, the vectors $X_{s_{\ell}}(j)$ and $X_{s_m}(k)$ are orthogonal for all $1\leq j< s_{\ell}$  and $1\leq k<s_m$. Therefore the set
$$\Big\{X_{s_i}(j)\,|s_i>1,\,1\leq j \leq s_i-1,\,1\leq i\leq k\Big\}$$ is a set of $\sum s_i-k$ orthogonal eigenvectors corresponding to $-1$.\\
By a similar argument, the set
$$\Big\{Y_{t_i}(j)\,|t_i>1,\,1\leq j \leq t_i-1,\,1\leq i\leq k\Big\}$$
provides $\sum t_i-k$ orthogonal eigenvectors corresponding to $-1$. Again any eigenvector of the form $X_{s_i}(\ell)$ is orthogonal to any eigenvector of the form $Y_{t_j}(m)$. Since $Q_S$ has a simple eigenvalue $-1$, therefore the multiplicity of the Seidel eigenvalue $-1$ is exactly $\sum s_i-k+\sum t_i-k+1$. That means,
$$n_{-1}(S)=n-2k+1 .$$
This completes the proof of the theorem.
\end{proof}
From Theorem \ref{Chain_quotient_th1} and Theorem \ref{Chain_pm1_Ch1}, it is clear that if $\lambda\neq-1$ is an eigenvalue of $S$ then it also an eigenvalue of $Q_S$. Since the eigenvalues of $Q_S$ can have multiplicity at most two. Thus by Theorem \ref{Chain_quotient_th3} and Theorem \ref{Chain_pm1_Ch1} we have the following corollary.

\begin{corollary}\label{Chain_quotient_cor1} Let $G$ be a chain graph with $|\pi|=2k$. If $m(G)$ denotes the number of distinct Seidel eigenvalues, then $m(G)$ satisfies the following:
$$k+1\leq m(G) \leq 2k.$$
\end{corollary}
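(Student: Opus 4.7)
The plan is to reduce the statement to a counting argument about eigenvalues of $Q_S$ and $S$, using Theorem \ref{Chain_quotient_th1} and Theorem \ref{Chain_pm1_Ch1} in tandem. The key intermediate claim is that the set of distinct Seidel eigenvalues of $G$ coincides with the set of distinct eigenvalues of $Q_S$; once this is in place, both bounds follow immediately.

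First I would establish $\operatorname{Spec}(Q_S) = \operatorname{Spec}(S)$ (as sets). The inclusion $\operatorname{Spec}(Q_S) \subseteq \operatorname{Spec}(S)$ was already shown in Section 2.1 via the relation $SP = PQ_S$: any eigenvector $X$ of $Q_S$ for $\lambda$ yields the eigenvector $PX$ of $S$ for $\lambda$ (and $PX \neq 0$ because $P$ has full column rank). For the reverse inclusion, suppose $\lambda$ is an eigenvalue of $S$; I need to show $\lambda \in \operatorname{Spec}(Q_S)$. If $\lambda = -1$, this holds by Theorem \ref{Chain_quotient_th1}. If $\lambda \neq -1$, I would run a dimension count: $S$ has $n$ eigenvalues counted with multiplicity, and by Theorem \ref{Chain_pm1_Ch1} the eigenvalue $-1$ contributes exactly $n - 2k + 1$ of them, leaving only $2k - 1$ positions (with multiplicity) for eigenvalues $\neq -1$. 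On the other hand, $Q_S$ is a $2k \times 2k$ matrix in which $-1$ is simple (Theorem \ref{Chain_quotient_th1}), so $Q_S$ supplies exactly $2k - 1$ eigenvalues (with multiplicity) different from $-1$; each of these is an eigenvalue of $S$ by the forward inclusion. These $2k - 1$ copies therefore already fill every non-$(-1)$ eigenvalue slot of $S$, so no value outside $\operatorname{Spec}(Q_S)$ can occur in $\operatorname{Spec}(S)$. Thus $m(G) = |\operatorname{Spec}(Q_S)|$.

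With that equality, the bounds are immediate. The lower bound $m(G) \geq k+1$ is just Theorem \ref{Chain_quotient_th3} transferred through the spectrum equality. The upper bound $m(G) \leq 2k$ is the trivial observation that $Q_S$ has size $2k$ and hence at most $2k$ distinct eigenvalues.

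I do not foresee any real obstacle. The only delicate point is phrasing the multiplicity count cleanly: one must observe that if some eigenvalue of $S$ were not in $\operatorname{Spec}(Q_S)$, then combining its multiplicity with the $2k-1$ non-$(-1)$ eigenvalues forced by $Q_S$ and the $n-2k+1$ copies of $-1$ from Theorem \ref{Chain_pm1_Ch1} would overshoot $n$, contradicting $\dim S = n$. Everything else is bookkeeping.
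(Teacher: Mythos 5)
Your proposal is correct and follows essentially the same route as the paper: the authors likewise combine Theorem \ref{Chain_quotient_th1} and Theorem \ref{Chain_pm1_Ch1} to conclude that every Seidel eigenvalue other than $-1$ is an eigenvalue of $Q_S$, and then read off the lower bound from Theorem \ref{Chain_quotient_th3} and the upper bound from the size $2k$ of $Q_S$. Your explicit multiplicity bookkeeping (the $n-2k+1$ copies of $-1$ plus the $2k-1$ remaining eigenvalues of $Q_S$ exhausting all $n$ slots) is just a more carefully spelled-out version of the argument the paper states in one line before the corollary.
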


\begin{remark}
Here both the bounds are tight. Let $G_1$ and $G_2$ be chain graphs with binary  strings $b_1=0^11^{2} 0^{2} 1^20^{2} 1^1$  and $b_2=0^11^10^11^10^11^1$, respectively. Then $G_1$ has 4 distinct Seidel eigenvalues (i.e., $m(G_1)=k+1$), on the other hand $G_2$ has $6$ distinct Seidel eigenvalues (i.e., $m(G_2)=2k$).
\end{remark}

We have checked several examples and we found that for any chain graph $G$, either $m(G)=k+1$ or $m(G)=2k$.
\begin{problem}
Is there exist a chain graph for which $k+1<m(G)<2k$?
\end{problem}\medskip

We now give some upper and lower bounds on the largest Seidel eigenvalue of a chain graph. Let $X=\left[\begin{array}{ccccc}
x_1&x_2&x_3&\cdots&x_{2k}
\end{array}\right]^T$ be an eigenvector corresponding to the largest Seidel eigenvalue $\lambda_1(G)$ of $Q_S$. Then $Q_SX=\lambda_1(G)X$, that is,
   $$\lambda_1(G)x_i=\sum\limits^{2k}_{j=1}\,(Q_S)_{ij}x_j,~~i=1,\,2,\ldots,\,2k.$$
From the above, we obtain $\lambda_1(G)\leq \sum\limits^{2k}_{j=1}\,|(Q_S)_{ij}|=\sum\limits^{k}_{i=1}\,s_i+\sum\limits^{k}_{i=1}\,t_i-1=n-1$. Moreover, one can easily see that the equality holds if and only if $k=1$, that is, if and only if $G$ is isomorphic to a complete bipartite graph.
We now give a lower bound for the largest eigenvalue $\lambda_1(G)$ using the Rayleigh quotient.
\begin{theorem}\label{1w1} Let $G$ be a chain graph of order $n$ with $|\pi|=2k$. Then
$$\lambda_1(G)\geq\frac{n}{2}+1-\frac{2k}{n}.$$
\end{theorem}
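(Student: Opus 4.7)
The plan is to prove the bound by a single Rayleigh-quotient calculation, with a test vector tailored to the bipartite structure of $G$. Since $G$ is bipartite with colour classes $Z=V_{s_1}\cup\cdots\cup V_{s_k}$ and $O=V_{t_1}\cup\cdots\cup V_{t_k}$ of sizes $X=\sum_i s_i$ and $Y=\sum_i t_i$ (so $X+Y=n$), the natural choice is the signed characteristic vector $\tilde x\in\mathbb{R}^n$ defined by $\tilde x_v=+1$ for $v\in Z$ and $\tilde x_v=-1$ for $v\in O$. Because every eigenvector of $S$ constructed for the eigenvalue $-1$ in Theorem~\ref{Chain_pm1_Ch1} has zero sum on each block, and $\lambda_1(G)=\lambda_1(Q_S)>-1$, this vector has a nontrivial projection on the dominant eigenspace, so Rayleigh should give a nontrivial bound.

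First I would compute $\tilde x^{\top}S\tilde x$ in closed form by writing $S=J-I-2A$. Three ingredients are needed: $\tilde x^{\top}J\tilde x=(\mathbf{1}^{\top}\tilde x)^2=(X-Y)^2$; $\tilde x^{\top}\tilde x=n$; and $\tilde x^{\top}A\tilde x=-2m$ (since every edge of the bipartite graph joins a $+1$-vertex to a $-1$-vertex, where $m$ is the edge count). Combining gives
\[
\tilde x^{\top}S\tilde x=(X-Y)^2-n+4m,
\]
and the Rayleigh inequality $\lambda_1(G)\ge\tilde x^{\top}S\tilde x/\tilde x^{\top}\tilde x$ produces
\[
\lambda_1(G)\ge\frac{(X-Y)^2+4m}{n}-1.
\]

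Next I would show this lower bound exceeds $\frac{n}{2}+1-\frac{2k}{n}$. Clearing denominators, the claim reduces to
\[
2\bigl((X-Y)^2+4m\bigr)\ge n^2+4n-4k.
\]
Using the identity $(X-Y)^2+4m=n^2-4(XY-m)$ together with the chain-graph edge formula $m=\sum_{i}t_i\sum_{j\le i}s_j$, which gives the clean expression $XY-m=\sum_{i<j}t_i s_j$ for the number of non-adjacent bipartite pairs, the required inequality is equivalent to
\[
\sum_{i<j}t_i s_j\;\le\;\frac{n^2-4n+4k}{8}.
\]
I would establish this last inequality by extremal analysis on the positive integer sequences $(s_i),(t_i)$ subject to $s_i,t_i\ge 1$ and $\sum_i(s_i+t_i)=n$, using rearrangement/AM--GM to reduce to a one-parameter optimisation in which the extremum is attained by a configuration with few blocks carrying all the weight.

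The main obstacle is this final combinatorial bound on $\sum_{i<j}t_is_j$: for fixed $(n,k)$ the block sizes still have many degrees of freedom, and the cross-sum depends sensitively on how mass is distributed between inner blocks; one must therefore identify the worst-case configuration and check that the bound is saturated precisely by it. Once that is done, chaining the Rayleigh estimate with this combinatorial inequality yields the stated bound.
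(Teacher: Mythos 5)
Your test vector is, up to the similarity $D^{1/2}(\cdot)D^{-1/2}$, exactly the one the paper uses: the vector $[\sqrt{s_1},\ldots,\sqrt{s_k},-\sqrt{t_1},\ldots,-\sqrt{t_k}]^{T}$ applied to the symmetrised quotient matrix $Q_S'$ lifts to your $\pm1$ bipartition vector $\tilde x$ on $S$, and the two Rayleigh quotients coincide, so the approaches are really the same. The difference is that your evaluation of the quadratic form is the correct one. Writing $X=\sum_i s_i$ and $Y=\sum_i t_i$, the $s$--$t$ cross terms in $X^{T}Q_S'X$ equal $2\sum_{i\le j}s_it_j-2\sum_{i>j}s_it_j=2(2m-XY)$, which is precisely what your identity $\tilde x^{T}S\tilde x=(X-Y)^2-n+4m$ encodes; the paper instead records these cross terms as $+2\sum_{i}s_it_i$, i.e.\ it keeps only the diagonal pairs and loses the sign alternation of the off-diagonal $s_i$--$t_j$ blocks. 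The two expressions agree only when $\sum_{i<j}s_it_j=\sum_{i>j}s_it_j$ (e.g.\ for symmetric chain graphs), so the paper's chain of estimates does not survive the corrected computation.

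The genuine gap in your proposal is the final combinatorial inequality $\sum_{i<j}t_is_j\le\frac{1}{8}(n^2-4n+4k)$, which you correctly identify as the main obstacle but do not prove. It cannot be proved, because it is false: for $k=2$, $s_1=t_2=1$, $t_1=s_2=2$, $n=6$ the left side is $4$ while the right side is $5/2$ (and more generally $t_1=s_2\approx(n-2)/2$ violates it for every $n\ge5$). Worse, your honest computation exposes that the stated bound itself fails. For the chain graph $0^11^20^21^1$ ($n=6$, $k=2$, $m=5$) one checks directly that $(1,0,0,1,1,-1)^{T}$ is a Seidel eigenvector for the eigenvalue $3$ and that the Seidel spectrum is $\{3^2,-1^3,-3\}$, so $\lambda_1(G)=3<10/3=\frac{n}{2}+1-\frac{2k}{n}$; consistently, your Rayleigh estimate gives only $\frac{(X-Y)^2+4m}{n}-1=\frac{14}{6}=\frac{7}{3}$ here. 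So the route cannot be completed as written: what your computation actually establishes is the (correct, but different) bound $\lambda_1(G)\ge\frac{(X-Y)^2+4m}{n}-1$, and no amount of extremal analysis on the block sizes will bridge the remaining gap, since Theorem \ref{1w1} as stated is contradicted by this example.
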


\begin{proof} For any symmetric matrix $B$, we have
$$\lambda_1(B)=\max_{X\neq 0} \frac{X^TBX}{X^TX}.$$
The matrix $Q_S$ is similar to the symmetric matrix $Q_S^{'}$ given by
 $$Q_S^{'}=\left[\begin{array}{ccccc|ccccc}
s_1 -1 & \sqrt{s_1s_2} &\sqrt{s_1s_3}&\cdots&\sqrt{s_1s_k} &-\sqrt{s_1t_1}&-\sqrt{s_1t_2}&-\sqrt{s_1t_3}&\cdots &-\sqrt{s_1t_k} \\[1mm]
\sqrt{s_2s_1}&s_2-1&\sqrt{s_2s_3} & \cdots&\sqrt{s_2s_k} &\sqrt{s_2t_1}&-\sqrt{s_2t_2}&-\sqrt{s_2t_3}&\cdots &-\sqrt{s_2t_k}\\[1mm]
\sqrt{s_3s_1}&\sqrt{s_3s_2}&s_3-1& \cdots&\sqrt{s_3s_k} &\sqrt{s_3t_1}&\sqrt{s_3t_2}&-\sqrt{s_1t_3}&\cdots &-\sqrt{s_1t_k}\\[1mm]
\cdots&\cdots&\cdots&\ddots&\cdots&\cdots&\cdots&\cdots&\cdots&\cdots\\[1mm]
\sqrt{s_ks_1}&\sqrt{s_ks_2}&\sqrt{s_ks_2}& \cdots&s_k-1 &\sqrt{s_kt_1}&\sqrt{s_kt_2}&\sqrt{s_kt_3}&\cdots &-\sqrt{s_kt_k}\\[1mm]
\hline
-\sqrt{s_1t_1}&\sqrt{s_2t_1}&\sqrt{s_3t_1}&\cdots&\sqrt{s_kt_1} &t_1-1&\sqrt{t_1t_2}&\sqrt{t_1t_3}&\cdots&\sqrt{t_1t_k}\\[1mm]
-\sqrt{s_1t_2}&-\sqrt{s_2t_2}&\sqrt{s_3t_2}&\cdots&\sqrt{s_kt_2} &\sqrt{t_1t_2}&t_2-1&\sqrt{t_2t_3}&\cdots&\sqrt{t_2t_k}\\[1mm]
-\sqrt{s_1t_3}&-\sqrt{s_2t_3}&-\sqrt{s_3t_3}&\cdots&\sqrt{s_kt_3} &\sqrt{t_1t_3}&\sqrt{t_2t_3}&t_3-1&\cdots&\sqrt{t_3t_k}\\[1mm]
\cdots&\cdots&\cdots&\cdots&\cdots&\cdots&\cdots&\cdots&\ddots&\cdots\\
-\sqrt{s_1t_k}&-\sqrt{s_2t_k}&-\sqrt{s_3t_k}&\cdots&-\sqrt{s_kt_k} &\sqrt{t_1t_k}&\sqrt{t_2t_k}&\sqrt{t_3t_k}&\cdots&t_k-1\\
\end{array}\right].$$
Let $X=[{\sqrt{s_1}}\  {\sqrt{s_2}}\  {\sqrt{s_3}}\ \cdots \ {\sqrt{s_k}} \ {-\sqrt{t_1}} \ {-\sqrt{t_2}}\ -\sqrt{t_3}\ \cdots\  -{\sqrt{t_k}}]^T$.
Here $X^TX=n$ and
\begin{align}
X^TQ_S^{'}X&=\sum_{i=1}^ks_i^2+2\sum_{\substack{1\leq i<j\leq k}}s_is_j-\sum_{i=1}^ks_i+\sum_{i=1}^kt_i^2+2\sum_{\substack{1\leq i<j\leq k}}\,t_it_j-\sum_{i=1}^kt_i+2\sum_{i=1}^ks_it_i\nonumber\\
&=\Big(\sum_{i=1}^k\,s_i\Big)^2+\Big(\sum_{i=1}^k\,t_i\Big)^2-\sum_{i=1}^k(s_i+t_i)+2\sum_{i=1}^ks_it_i.\label{kin1}
\end{align}
It is well known that $a^2+b^2\geq \dfrac{(a+b)^{2}}{2}$ for all $a,\,b\geq 0$. Thus we have
   $$\Big(\sum_{i=1}^ks_i\Big)^2+\Big(\sum_{i=1}^kt_i\Big)^2\geq \frac{1}{2}\,\Big(\sum_{i=1}^k\,s_i+\sum_{i=1}^k\,t_i\Big)^2=\frac{1}{2}\,n^2.$$
Now since the chain graph has $n$ vertices and ${s_i}\geq1$ and ${t_i}\geq1$, we obtain
\begin{align*}
2\sum_{i=1}^k\,s_it_i-\sum_{i=1}^k\,(s_i+t_i)&=\sum_{i=1}^k\,\Big[s_i\,(t_i-1)+t_i\,(s_i-1)\Big]\\
&\geq \sum_{i=1}^k\,(s_i+t_i-2)=n-2k.
\end{align*}
Using the above results, from (\ref{kin1}), we  obtain
$$X^TQ_S^{'}X\geq\frac{n^2}{2}+n-2k.$$
Which gives,
$$\lambda_1(G)=\lambda_1(Q_S)=\lambda_1(Q^{'}_s)\geq \frac{X^TQ_S^{'}X}{X^TX}\geq \frac{n}{2}+1-\frac{2k}{n}.$$
This completes the proof of the theorem.
\end{proof}


\section{Characteristic polynomial of chain graphs}
In this section, we obtain a formula for finding the characteristic polynomial of the Seidel matrix $S$ of a connected chain graph using its binary representation. First we convert the quotient matrix $Q_{J-2A}$ into a sparse matrix by using row and column operations. The characteristic polynomials of $Q_S$ and $Q_{J-2A}$ satisfy the following relation:
\begin{equation}
\label{Chain_cp_eq0}
\psi_{Q_S}(x)=\det(Q_S-xI)=\det\Big(Q_{J-2A}-(x+1)I\Big)=\psi_{Q_{J-2A}}(x+1)
\end{equation}
where $\det B$ stands for determinant of a square matrix $B$.
\begin{theorem}
\label{Chain_cp_th1}
The characteristic polynomial $\psi_S(x)$ of the chain graph $G$ with binary string $b=0^{s_1} 1^{t_1} 0^{s_2} \ldots 0^{s_k} 1^{t_k}$, $k\geq2$, is given by the following formula,
\begin{equation}
\label{Chain_cp_eq1}
 \psi_S(x)= \frac{(x+1)}{2}^{n-2k+1} \Big[2(x+1)^{2k-1}+(-1)^{k-1}(x+1)^2\det U_{x+1}+(-1)^k\det V_{x+1}\Big]
\end{equation}
where $U_{x+1}$ is a tridiagonal matrix of order $2k-3$ given by

 $$U_{x+1}=\begin{bmatrix}
2s_2& x+1 &0&0  &0& \ldots & 0&0 \\[1mm]
x+1 &2 t_2 &x+1 &0&0& \ldots &0&0\\[1mm]
0& x+1 &2s_3 & x+1 &0& \ldots &0&0\\[1mm]
0&0& x+1 &2t_3& x+1 &\ldots&0&0\\[1mm]
0&0&0& x+1 &2s_4& \ldots &0&0 \\[1mm]
& & & & & \ddots \\[1mm]
0&0&0&0&0&\ldots&2t_{k-1}& x+1\\[1mm]
0&0 &0 &0 &0 & \ldots & x+1 &2s_k
\end{bmatrix}$$
and $V_{x+1}$ is another tridiagonal matrix of order $2k-1$ given by
 $$V_{x+1}=\begin{bmatrix}
2t_1& x+1 &0&0  &0& \ldots & 0&0 \\[1mm]
x+1 &2 s_2 & x+1 &0&0& \ldots &0&0\\[1mm]
0& x+1&2t_2 & x+1 &0& \ldots &0&0\\[1mm]
0&0& x+1 &2s_3& x+1&\ldots&0&0\\[1mm]
0&0&0& x+1 &2t_3& \ldots &0&0 \\[1mm]
& & & & & \ddots \\[1mm]
0&0&0&0&0&\ldots&2s_k& x+1\\[1mm]
0&0 &0 &0 &0 & \ldots & x+1 &2(s_1+t_k)
\end{bmatrix}.$$
\end{theorem}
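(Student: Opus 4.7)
The plan is to reduce the problem to the $(2k)\times(2k)$ quotient matrix and then perform explicit row operations to expose the tridiagonal determinants $\det U_{x+1}$ and $\det V_{x+1}$.

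By Theorem~\ref{Chain_quotient_th1} the eigenvalue $-1$ is simple for $Q_S$, and by Theorem~\ref{Chain_pm1_Ch1} it has multiplicity $n-2k+1$ in $S$. Since every eigenvalue of $Q_S$ is an eigenvalue of $S$ (as observed in Section~2), the full Seidel spectrum of $G$ is the multiset union of $\mathrm{spec}(Q_S)$ with $n-2k$ additional copies of $-1$. Hence
\[
\psi_S(x) \;=\; (x+1)^{\,n-2k}\,\psi_{Q_S}(x).
\]
Using (\ref{Chain_cp_eq0}), it suffices to prove, with $y:=x+1$, that
\[
\det(Q_{J-2A}-yI) \;=\; \tfrac{y}{2}\bigl[\,2y^{2k-1}+(-1)^{k-1}y^{2}\det U_{y}+(-1)^{k}\det V_{y}\,\bigr].
\]

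On $M:=Q_{J-2A}-yI$, I would apply the determinant-preserving operations $R_{i}\to R_{i}-R_{i+2}$ for $i=1,\ldots,2k-2$, followed by $R_{2k}\to R_{2k}+R_{2k-1}$. A direct block-by-block calculation (using that the $(s_i,t_j)$-blocks of $J-2A$ are $\pm J$ according to the rule $i\le j$) shows that for $1\le j\le k-1$ each transformed $R_{2j-1}$ has exactly the entries $-y,\,-2t_j,\,y$ in columns $2j-1,2j,2j+1$, each $R_{2j}$ has exactly $-y,\,2s_{j+1},\,y$ in columns $2j,2j+1,2j+2$, row $R_{2k-1}$ is unchanged, and the new $R_{2k}$ equals $[\,0,2t_1,0,2t_2,\dots,0,2t_{k-1},-y,-y\,]$. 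Writing the reduced matrix as $\left(\begin{smallmatrix}A&B\\C&D\end{smallmatrix}\right)$ with $A$ the upper-left $(2k-2)\times(2k-2)$ block, the block $A$ becomes upper-triangular with every diagonal entry equal to $-y$. Hence $\det A=y^{2k-2}$, and the Schur-complement identity $\det M = y^{2k-2}\det(D-CA^{-1}B)$ is a polynomial identity in $y$.

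The columns of $A^{-1}B$ are computed by back-substitution in $A$, which is a linear three-term recurrence whose coefficients alternate between $2t_j$ and $2s_{j+1}$ with off-diagonal $y$. Pairing these vectors against the rows of $C$ assembles into two tridiagonal determinants: the ``interior'' recurrence contributes $\det U_y$, while the recurrence that pairs the $s_1$ at the left end of $R_{2k-1}$ with the $-y$ at the tail of the new $R_{2k}$ contributes $\det V_y$; the non-standard last diagonal entry $2(s_1+t_k)$ of $V_y$ arises exactly from this boundary combination. Expanding the $2\times 2$ Schur complement and collecting terms produces the three summands $2y^{2k-1}$, $(-1)^{k-1}y^{2}\det U_y$, and $(-1)^{k}\det V_y$; the alternating signs $(-1)^{k-1},(-1)^{k}$ accumulate through the $2k-2$ back-substitution steps.

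The principal challenge is this last bookkeeping: correctly identifying the back-substitution recurrences with the specific tridiagonal matrices $U_y$ and $V_y$ (including the peculiar final entry $2(s_1+t_k)$) and tracking the signs uniformly in $n$. As a safety net, one can do induction on $k$: the base case $k=2$ is a direct $4\times 4$ computation, and both sides can be shown to satisfy the same three-term recurrence when a new pair of blocks $0^{s_{k+1}}1^{t_{k+1}}$ is appended to the binary string.
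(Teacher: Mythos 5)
Your setup is sound and in fact runs parallel to the paper's own proof: the reduction $\psi_S(x)=(x+1)^{n-2k}\psi_{Q_S}(x)$ via the multiplicity count, the passage to $Q_{J-2A}$ through (\ref{Chain_cp_eq0}), and the row operations subtracting rows two apart are all exactly the paper's strategy (the paper uses $R_i\to R_i-R_{i-2}$, the mirror image of your $R_i\to R_i-R_{i+2}$, producing the same banded structure with the two dense rows at the top instead of the bottom). Your block entries after the reduction check out: row $2j-1$ becomes $(-y,\,-2t_j,\,y)$ in columns $2j-1,2j,2j+1$, row $2j$ becomes $(-y,\,2s_{j+1},\,y)$ in columns $2j,2j+1,2j+2$, and the new last row is $[0,2t_1,0,2t_2,\dots,0,2t_{k-1},-y,-y]$.

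The genuine gap is that the proof stops exactly where the theorem begins. The entire content of the statement is the identity
$$\det(Q_{J-2A}-yI)=\tfrac{y}{2}\bigl[2y^{2k-1}+(-1)^{k-1}y^{2}\det U_{y}+(-1)^{k}\det V_{y}\bigr],$$
and your argument for it is ``pairing these vectors against the rows of $C$ assembles into two tridiagonal determinants,'' with the signs $(-1)^{k-1},(-1)^k$, the factor $\tfrac12$ (which is not produced by any of your determinant-preserving operations and must emerge from the $2\times2$ Schur complement algebra), and the boundary entry $2(s_1+t_k)$ of $V_y$ all asserted rather than derived --- you yourself flag this as ``the principal challenge.'' The paper does not leave this to bookkeeping: after further row operations it Laplace-expands to exhibit two explicit matrices $M_1$ (order $2k-1$) and $M_2$ (order $2k-2$) and verifies $\det M_1=(-1)^k\det V_x-x^{2k-1}$ and $\det M_2=x^{2k-2}+(-1)^{k-1}x\det U_x$, from which the formula follows by substitution. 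Your fallback induction on $k$ is also not carried out, and is not routine: appending a block $0^{s_{k+1}}1^{t_{k+1}}$ changes the final entry $2(s_1+t_k)$ of $V$, so $\det V$ does not satisfy a clean three-term recurrence in $k$ without first peeling off that corner term. To complete the proof you must either compute $D-CA^{-1}B$ explicitly (the back-substitution vectors are ratios of consecutive tridiagonal minors, and the cross terms with $s_1$ and $-y$ must be shown to reconstitute the bordered matrix $V_y$), or follow the paper's route to $M_1$ and $M_2$.
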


\begin{proof}
We have $\psi_{Q_S}(x)=\psi_{Q_{J-2A}}(x+1)$. We apply some elementary row and column operations on the quotient matrix $Q_{J-2A}$ given by
 $$Q_{J-2A}=\begin{bmatrix}
s_1 & -t_1 &s_2  &-t_2&s_3  &-t_3& \ldots & s_k&-t_k \\[1mm]
-s_1&t_1 & s_2 &t_2&s_3&t_3& \ldots & s_k&t_k\\[1mm]
s_1 &t_1&s_2&-t_2 &s_3 &-t_3& \ldots &s_k& -t_k\\[1mm]
-s_1&t_1&-s_2&t_2&s_3&t_3&\ldots&s_k&t_k\\[1mm]
s_1&t_1&s_2&t_2&s_3&-t_3&\ldots&s_k&-t_k \\[1mm]
-s_1&t_1&-s_2&t_2&-s_3&t_3&\ldots &s_k&t_k \\[1mm]
& & & & & & \ddots \\[1mm]
s_1&t_1&s_2&t_2&s_3&t_3&\ldots&s_k&-t_k \\[1mm]
-s_1 &  t_1 &-s_2 &t_2 &-s_3 &t_3& \ldots &-s_k & t_k
\end{bmatrix}.$$

The characteristic polynomial of $Q_{J-2A}$ is given by

\begin{eqnarray*}
\psi_{Q_{J-2A}}(x)&=&\det (Q_{J-2A}-xI)\\&=&\begin{vmatrix}
s_1-x & -t_1 &s_2  &-t_2&s_3  &-t_3& \ldots & s_k&-t_k \\[1mm]
-s_1&t_1-x & s_2 &t_2&s_3&t_3& \ldots & s_k&t_k\\[1mm]
s_1 &t_1&s_2-x&-t_2 &s_3 &-t_3& \ldots &s_k& -t_k\\[1mm]
-s_1&t_1&-s_2&t_2-x&s_3&t_3&\ldots&s_k&t_k\\[1mm]
s_1&t_1&s_2&t_2&s_3-x&-t_3&\ldots&s_k&-t_k \\[1mm]
-s_1&t_1&-s_2&t_2&-s_3&t_3-x&\ldots &s_k&t_k \\[1mm]
& & & & & & \ddots \\[1mm]
s_1&t_1&s_2&t_2&s_3&t_3&\ldots&s_k-x&-t_k \\[1mm]
-s_1 &  t_1 &-s_2 &t_2 &-s_3 &t_3& \ldots &-s_k &t_k-x
\end{vmatrix}
\end{eqnarray*}
Applying the following operations respectively,\\

$ R_i^{'}\gets R_i-R_{i-2}$, for $i=even\geq4$; $ R_j^{'}\gets R_j-R_{j-2}$, for $j=odd\geq3$; $ R_1\gets R_1+R_2$. Then
$$ \psi_{Q_{J-2A}}(x)= \begin{vmatrix}
-x &-x &2s_2  &0&2s_3  &0& \ldots & 2s_k&0 \\[1mm]
-s_1&t_1-x & s_2 &t_2&s_3&t_3& \ldots & s_k&t_k\\[1mm]
x &2t_1&-x&0 &0 &0& \ldots &0&0\\[1mm]
0&x&-2s_2&-x&0&0&\ldots&0&0\\[1mm]
0&0&x&2t_2&-x&0&\ldots&0&0 \\[1mm]
0&0&0&x&-2s_3&-x&\ldots&0&0 \\[1mm]
& & & & & & \ddots \\[1mm]
0&0&0&0&0&0&\ldots&-x&0 \\[1mm]
0&0&0&0&0&0&\ldots&-2s_k&-x
\end{vmatrix}.$$

Again applying the following operations respectively,\\

$ R_1^{'}\gets R_1+ \sum R_i$, for $i=even\geq4$; $ R_2^{'}\gets R_2-\frac{1}{2}\{ \sum R_j-\sum R_p\}$, for $j=odd\geq3$, and $p=even\geq4$, we obtain
$$ \psi_{Q_{J-2A}}(x)= \begin{vmatrix}
-x &0 &0  &0&0  &0& \ldots &0&-x \\[1mm]
-(s_1+\frac{x}{2})&-\frac{x}{2} &0 &0&0&0& \ldots &\frac{x}{2}&(t_k-\frac{x}{2})\\[1mm]
x &2t_1&-x&0 &0 &0& \ldots &0&0\\[1mm]
0&x&-2s_2&-x&0&0&\ldots&0&0\\[1mm]
0&0&x&2t_2&-x&0&\ldots&0&0 \\[1mm]
0&0&0&x&-2s_3&-x&\ldots&0&0 \\[1mm]
& & & & & & \ddots \\[1mm]
0&0&0&0&0&0&\ldots&-x&0 \\[1mm]
0&0&0&0&0&0&\ldots&-2s_k&-x
\end{vmatrix}.$$

Performing $ R_2^{'}\gets 2R_2$ and $ C_{2k}^{'}\gets C_{2k}-C_1$ respectively,

\begin{eqnarray*}
\psi_{Q_{J-2A}}(x)&=& \frac{1}{2}\begin{vmatrix}
-x &0 &0  &0&0  &0& \ldots &0&0 \\
-(2s_1+x)&-x &0 &0&0&0& \ldots &x&2(s_1+t_k)\\[1mm]
x &2t_1&-x&0 &0 &0& \ldots &0&-x\\[1mm]
0&x&-2s_2&-x&0&0&\ldots&0&0\\[1mm]
0&0&x&2t_2&-x&0&\ldots&0&0 \\[1mm]
0&0&0&x&-2s_3&-x&\ldots&0&0 \\[1mm]
& & & & & & \ddots \\[1mm]
0&0&0&0&0&0&\ldots&-x&0 \\[1mm]
0&0&0&0&0&0&\ldots&-2s_k&-x
\end{vmatrix}\\\\\\
&=& -\frac{x}{2}\,\begin{vmatrix}
-x &0 &0&0&0& \ldots &x&2(s_1+t_k)\\[1mm]
2t_1&-x&0 &0 &0& \ldots &0&-x\\[1mm]
x&-2s_2&-x&0&0&\ldots&0&0\\[1mm]
0&x&2t_2&-x&0&\ldots&0&0 \\[1mm]
0&0&x&-2s_3&-x&\ldots&0&0 \\[1mm]
& & & & & \ddots \\[1mm]
0&0&0&0&0&\ldots&-x&0 \\[1mm]
0&0&0&0&0&\ldots&-2s_k&-x
\end{vmatrix}\end{eqnarray*}
After taking permutations on rows,

\begin{eqnarray*}
\psi_{Q_{J-2A}}(x)&=&-\frac{x}{2}(-1)^{2k-2}\begin{vmatrix}
2t_1&-x&0 &0 &0& \ldots &0&-x\\[1mm]
x&-2s_2&-x&0&0&\ldots&0&0\\[1mm]
0&x&2t_2&-x&0&\ldots&0&0 \\[1mm]
0&0&x&-2s_3&-x&\ldots&0&0 \\[1mm]
0&0&0&x&2t_3&\ldots&0&0\\[1mm]
& & & & & \ddots \\[1mm]
0&0&0&0&0&\ldots&-2s_k&-x\\[1mm]
-x&0 &0&0&0& \ldots &x&2(s_1+t_k)
\end{vmatrix}\\\\\\
 &=& -\frac{x}{2}\begin{vmatrix}
2t_1&-x&0 &0 &0& \ldots &0&-x\\[1mm]
x&-2s_2&-x&0&0&\ldots&0&0\\[1mm]
0&x&2t_2&-x&0&\ldots&0&0 \\[1mm]
0&0&x&-2s_3&-x&\ldots&0&0 \\[1mm]
0&0&0&x&2t_3&\ldots&0&0\\[1mm]
& & & & & \ddots \\[1mm]
0&0&0&0&0&\ldots&-2s_k&-x\\[1mm]
-x&0 &0&0&0& \ldots &x&2(s_1+t_k)
\end{vmatrix}.\end{eqnarray*}
Expanding we obtain
\begin{equation}
\label{Chain_cp_eq3}
 \psi_{Q_{J-2A}}(x)= -\frac{x}{2}\,\Big(\det M_1-x\det M_2\Big),
\end{equation}

where $M_1$ is a square matrix of order $2k-1$ given by
$$M_1=\begin{bmatrix}
2t_1&-x &0&0  &0& \ldots & 0&0 \\[1mm]
x &-2 s_2 &-x&0&0& \ldots &0&0\\[1mm]
0&x&2t_2 &-x&0& \ldots &0&0\\[1mm]
0&0&x&-2s_3&-x&\ldots&0&0\\[1mm]
0&0&0&x&2t_3& \ldots &0&0 \\[1mm]
& & & & & \ddots \\[1mm]
0&0&0&0&0&\ldots&-2s_k&-x\\[1mm]
-x&0 &0 &0 &0 & \ldots &x&2(s_1+t_k)
\end{bmatrix},$$

 and $M_2$ is another square matrix of order $2k-2$ given by

  $$M_2=\begin{bmatrix}
x&-2s_2&-x &0&0& \ldots & 0&0 \\[1mm]
0&x &2 t_2 &-x&0&\ldots &0&0\\[1mm]
0&0&x&-2s_3 &-x& \ldots &0&0\\[1mm]
0&0&0&x&2t_3&\ldots&0&0\\[1mm]
0&0&0&0&x& \ldots &0&0 \\[1mm]
& & & & & \ddots \\[1mm]
0&0&0&0&0&\ldots&x&-2s_k\\[1mm]
-x&0 &0 &0 &0 & \ldots &0&x
\end{bmatrix}.$$
Clearly, $$\det M_1=(-1)^k\det V_x-x^{2k-1}$$
and $$\det M_2=x^{2k-2}+(-1)^{k-1}\,x\,\det U_x.$$
From equation (\ref{Chain_cp_eq3}), we obtain
\begin{equation}
\label{Chain_cp_eq4}
 \psi_{Q_{J-2A}}(x)=\frac{x}{2} \Big[2x^{2k-1}+(-1)^{k-1}x^2\det U_x+(-1)^k\det V_x\Big].
\end{equation}
Now by Corollary 2.1, any eigenvalue of $S$ which is not an eigenvalue of $Q_S$ must be equal to $-1$. Therefore by (\ref{Chain_cp_eq0}), we obtain,
 $$\psi_S(x)= \frac{(x+1)}{2}^{n-2k+1} \Big[2(x+1)^{2k-1}+(-1)^{k-1}(x+1)^2\det U_{x+1}+(-1)^k\det V_{x+1}\Big]. $$
 Which completes the proof.
\end{proof}

\begin{example}
Let $G$ be the chain graph with binary string $b=0^11^20^21^3$. Then
$$\det U_{x+1}=4,$$
and
$$\det V_{x+1}=\begin{vmatrix}
4& x+1 &0 \\[1mm]
x+1 &4 & x+1 \\[1mm]
0& x+1 &8
\end{vmatrix}=128-12(x+1)^2.$$
Therefore, Seidel characteristic polynomial of $G$ is
\begin{eqnarray*}
\begin{split}
\psi_S(x)&=\dfrac{(x+1)^5}{2}\Big[2(x+1)^3-(x+1)^2\det U_{x+1}+\det V_{x+1}\Big]\\[2mm]
&=(x+1)^8-8(x+1)^7+64(x+1)^5\\[1mm]
&=x^8-28x^6-48x^5+110x^4+416x^3+500x^2+272x+57.
\end{split}
\end{eqnarray*}
\end{example}

 In the Theorem \ref{Chain_cp_th1}, the expression of characteristic polynomial contains determinant of two symmetric matrices. The determinant of a symmetric tridiagonal matrix can be calculated iteratively. Let $B$ be a symmetric tridiagonal matrix of the form
 $$B=\begin{bmatrix}
a_1&b_1&0&0  &\ldots & 0&0 \\[1mm]
b_1 &a_2 &b_2&0& \ldots &0&0\\[1mm]
& & & &  \ddots \\[1mm]
0&0&0&0&\ldots&a_{n-1}&b_{n-1}\\[1mm]
0&0 &0  &0 & \ldots &b_{n-1}&a_n
\end{bmatrix}.$$
If $D_i$ denotes the determinant of the principal submatrix by taking first $i$ rows and columns, then $\det\ B$ can be obtained by using the following formula
\begin{equation}
\label{Chain_cp_eq5}
D_i=a_iD_{i-1}-b_{i-1}^2D_{i-2},
\end{equation}
where $D_0=1$ and $D_1=a_1$. Further let $a_1=a_2=\cdots=a_n=c$ and $b_1=b_2=\cdots=b_{n-1}=1$. If $D_i(c)$ denotes the determinant of the principal submatrix by taking first $i$ rows and columns, then the determinants $D_i(c)$, $i\geq0$ can be computed (see \cite[Theorem 3.1]{Qi}) by
\begin{equation}
\label{Chain_cp_eq6}
D_i(c)=\begin{cases}
\dfrac{\alpha^{i+1}-\beta^{i+1}}{\alpha-\beta} &\text{if }c\neq\pm2,\\[3mm]
i+1 &\text{if }c=2,\\[2mm]
(-1)^i(i+1) &\text{if }c=-2,
\end{cases}
\end{equation}
where $\alpha=\frac{1}{\beta}=\displaystyle{\frac{c+\sqrt{c^2-4}}{2}}$.

\begin{remark}[Determinant] \label{Chain_det_rm1} {\rm Putting $x=1$ in equation (\ref{Chain_cp_eq4}), we have
\begin{equation}\label{Chain_det_eq1}
\det Q_S=\frac{1}{2}\Big[2+(-1)^{k}(\det V-\det U)\Big],
\end{equation}
where $U=U_1$ and $V=V_1$.
We can find out the determinant of the tridiagonal matrices $U$ and $V$ by using the recurrence relation (\ref{Chain_cp_eq5}). Therefore the determinant of the Seidel matrix is given by,
\begin{equation}
\label{Chain_det_eq2}
\det S=(-1)^{n-2k}\det Q_S.
\end{equation}}
\end{remark}

\begin{corollary} Let $n=2k$, that is, $s_i=t_i=1$ for $i=1,\,2,\ldots,\,k$. Then
$$\det S=(-1)^{\frac{n}{2}}\,n+1.$$
\end{corollary}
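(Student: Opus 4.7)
The plan is to apply Remark \ref{Chain_det_rm1} and then reduce the problem to evaluating two very rigid tridiagonal determinants. Since $n = 2k$, the factor $(-1)^{n-2k}$ in (\ref{Chain_det_eq2}) equals $1$, so $\det S = \det Q_S$, and Remark \ref{Chain_det_rm1} gives
$$\det S \;=\; \tfrac{1}{2}\bigl[\,2 + (-1)^k(\det V - \det U)\,\bigr].$$
Under the hypothesis $s_i = t_i = 1$, the matrices $U = U_1$ and $V = V_1$ specialize dramatically: $U$ becomes the $(2k-3)\times(2k-3)$ symmetric tridiagonal matrix with every diagonal entry equal to $2$ and every sub/super-diagonal entry equal to $1$, while $V$ is the analogous $(2k-1)\times(2k-1)$ tridiagonal matrix with the same entries everywhere except the last diagonal entry, which is $2(s_1+t_k) = 4$ rather than $2$. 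So the whole identity reduces to computing $\det V - \det U = 4k$.

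For $\det U$ I would simply invoke the closed form in (\ref{Chain_cp_eq6}) at the boundary case $c = 2$, which says $D_m(2) = m+1$; this yields $\det U = (2k-3)+1 = 2k-2$. For $\det V$ the same closed form handles all leading principal minors of size $\le 2k-2$, since those submatrices coincide with the uniform $c=2$ pattern; the only deviation appears at the last step, which I would dispatch with the three-term recurrence (\ref{Chain_cp_eq5}):
$$\det V \;=\; 4\cdot D_{2k-2}(2) - 1\cdot D_{2k-3}(2) \;=\; 4(2k-1) - (2k-2) \;=\; 6k - 2.$$
Subtracting gives $\det V - \det U = (6k-2)-(2k-2) = 4k$, and substituting back produces
$$\det S \;=\; \tfrac{1}{2}\bigl[\,2 + (-1)^k\cdot 4k\,\bigr] \;=\; 1 + (-1)^k\cdot 2k \;=\; (-1)^{n/2}\,n + 1,$$
since $k = n/2$.

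The only caveat is that Theorem \ref{Chain_cp_th1}, and therefore Remark \ref{Chain_det_rm1}, is stated for $k \ge 2$, so the corner case $k = 1$ must be checked by hand; there the chain graph is $K_2$ with Seidel matrix $\bigl(\begin{smallmatrix} 0 & -1\\ -1 & 0 \end{smallmatrix}\bigr)$, and $\det S = -1 = (-1)^1\cdot 2 + 1$ as required. I do not anticipate a real obstacle: the one calculation that needs care is the single-step cofactor (or recurrence) modification of $\det V$ to accommodate the anomalous last diagonal entry $2(s_1+t_k)=4$, and this is exactly what (\ref{Chain_cp_eq5}) is designed to do.
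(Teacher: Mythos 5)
Your proposal is correct and follows essentially the same route as the paper: reduce to $\det S=\det Q_S$ via the determinant formula in Remark \ref{Chain_det_rm1}, evaluate $\det U=D_{2k-3}(2)=2k-2$ from (\ref{Chain_cp_eq6}) and $\det V=4D_{2k-2}(2)-D_{2k-3}(2)=6k-2$ from the recurrence (\ref{Chain_cp_eq5}), and simplify. Your explicit check of the corner case $k=1$ (where Theorem \ref{Chain_cp_th1} does not formally apply) is a small but worthwhile addition that the paper omits.
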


\begin{proof} By (\ref{Chain_det_eq2}), $\det S=\det Q_S$. Since $s_i=t_i=1$, by (\ref{Chain_cp_eq6}), we obtain
$$\det U=D_{2k-3}(2)=2k-2,$$
and
$$\det V=4D_{2k-2}(2)-D_{2k-3}(2)=6k-2.$$
Therefore, from (\ref{Chain_det_eq1}), we get
\begin{eqnarray*}
\det S&=&\frac{1}{2}\,\Big[2+(-1)^{k-1}\det U+(-1)^k\det V\Big]\\[2mm]
&=& (-1)^{\frac{n}{2}}\,n+1.
\end{eqnarray*}
Which completes the proof.
\end{proof}


\section{Seidel energy of chain graph}
   In this section we give some energy bounds for Seidel matrix of chain graphs. We obtain a better bound of $SE(G)$ than Haemers \cite{Haemers} bound. First we recall some useful and important results.

\begin{lemma} {\rm \cite{Fan}} \label{energy_Lemma_Fan}
If $X,~Y,~Z$ be three real symmetric matrices of order $n$ such that $Z=X+Y$, then
$$E(Z)\leq E(X)+E(Y),$$
where $E(X)=\sum_{i=1}^{n}|\lambda_i(X)|$ is the energy of $X$, and $\lambda_i(X)$ $(i=1,2,\ldots,n)$ are the eigenvalues of $X$.
\end{lemma}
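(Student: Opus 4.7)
The plan is to recast the matrix energy as a support functional and exploit linearity of the trace. Concretely, for any real symmetric matrix $X$ of order $n$ I would establish the variational identity
$$E(X)=\max\bigl\{\operatorname{tr}(MX)\;:\;M=M^{T},\ \|M\|\le 1\bigr\},$$
where $\|\cdot\|$ denotes the spectral norm. Once this is in hand, subadditivity of $E$ is almost immediate: letting $M_{Z}=P_{Z}-N_{Z}$, with $P_{Z}$ and $N_{Z}$ the orthogonal projections onto the positive and negative eigenspaces of $Z$, we have $M_{Z}^{T}=M_{Z}$ and $\|M_{Z}\|\le 1$, so that
$$E(Z)=\operatorname{tr}(M_{Z}Z)=\operatorname{tr}(M_{Z}X)+\operatorname{tr}(M_{Z}Y)\le E(X)+E(Y),$$
where the last inequality applies the variational formula first to $X$ and then to $Y$, with the same admissible test matrix $M_{Z}$.

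To prove the variational identity, I would diagonalize $X=\sum_{i=1}^{n}\lambda_{i}u_{i}u_{i}^{T}$ in an orthonormal basis of eigenvectors. For any symmetric $M$ with $\|M\|\le 1$, the Rayleigh quotients satisfy $|u_{i}^{T}Mu_{i}|\le\|M\|\le 1$, so
$$\operatorname{tr}(MX)=\sum_{i=1}^{n}\lambda_{i}\,u_{i}^{T}Mu_{i}\le\sum_{i=1}^{n}|\lambda_{i}|=E(X),$$
which gives the upper bound. Equality is attained by choosing $M=P_{X}-N_{X}$, since then $u_{i}^{T}Mu_{i}=\operatorname{sgn}(\lambda_{i})$ and $\operatorname{tr}(MX)=\sum_{i}|\lambda_{i}|=E(X)$.

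The only non-routine ingredient is the variational formula; everything else is the linearity of the trace. I do not expect any real obstacle, because the result is precisely the triangle inequality for the Schatten $1$-norm $\|\cdot\|_{1}=E(\cdot)$ restricted to the real symmetric cone, and the argument above gives a self-contained derivation without invoking any machinery beyond the spectral theorem. If one prefers, the same conclusion can be obtained from Ky Fan's dominance theorem, which asserts that $\sum_{i=1}^{k}\sigma_{i}(X+Y)\le\sum_{i=1}^{k}\sigma_{i}(X)+\sum_{i=1}^{k}\sigma_{i}(Y)$ for the singular values; summing over all $k=1,\dots,n$ (equivalently, taking $k=n$ for the Ky Fan $n$-norm) yields the claim since singular values of a real symmetric matrix coincide with the absolute values of its eigenvalues.
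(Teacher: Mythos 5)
Your argument is correct, but note that the paper does not prove this statement at all: it is imported verbatim from Ky Fan's 1951 paper as a cited lemma, so there is no in-paper proof to compare against. What you have supplied is a legitimate self-contained derivation of the triangle inequality for the trace norm on real symmetric matrices, via the duality
$$E(X)=\max\bigl\{\operatorname{tr}(MX)\;:\;M=M^{T},\ \|M\|\leq 1\bigr\},$$
and every step checks out: the upper bound $\operatorname{tr}(MX)\leq\sum_i|\lambda_i|$ follows from $|u_i^{T}Mu_i|\leq\|M\|$, equality is attained at $M=P_X-N_X$ (whose eigenvalues lie in $\{-1,0,1\}$, so $\|M\|\leq1$), and subadditivity then drops out of linearity of the trace with the single test matrix $M_Z$. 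The one cosmetic caveat is that when $Z$ has zero eigenvalues, $u_i^{T}M_Zu_i=0$ rather than $\operatorname{sgn}(\lambda_i)$ on the kernel, but those terms contribute nothing to $\operatorname{tr}(M_ZZ)$, so the identity $\operatorname{tr}(M_ZZ)=E(Z)$ still holds. Your closing remark that the statement is the $k=n$ case of Ky Fan's dominance theorem for singular values is exactly the route the original citation takes; your support-functional argument is the more elementary of the two, needing nothing beyond the spectral theorem, and would serve as a proof if one wished to make the paper self-contained on this point.
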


\begin{lemma} {\rm \cite{Biler}} \label{Lemma_Biler}
For positive real numbers $a_1,~a_2,~a_3,\cdots , a_n$,\\
$$p_1 \geq p_2^{\frac{1}{2}}\geq p_3^{\frac{1}{3}} \geq \cdots \geq p_n^{\frac{1}{n}},$$
where $p_k$ is the average of products of $k-$element subset of the set $\{a_1,a_2,a_3,\ldots, a_n \}$, that is,
\begin{align*}
p_1&=\frac{1}{n}(a_1 +a_2 +a_3 + \cdots +a_n), \\
p_2&=\frac{1}{\frac{n(n-1)}{2}}(a_1a_2  +a_1a_3 + \cdots + a_1a_n +a_2a_3 + \cdots + a_{n-1}a_n),\\
\vdots \\
p_n&=a_1a_2 \cdots a_n.
\end{align*}
Moreover, equalities hold if and only if $a_1=a_2=\cdots=a_n$.
\end{lemma}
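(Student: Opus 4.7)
The plan is to derive Maclaurin's inequality from the classical Newton inequalities $p_k^2 \geq p_{k-1} p_{k+1}$ (with the convention $p_0 = 1$) via a log-concavity argument. Newton's inequalities themselves come from the observation that
$$P(x) = \prod_{i=1}^n (1 + a_i x) = \sum_{k=0}^n \binom{n}{k} p_k\, x^k$$
has only negative real roots because each $a_i > 0$. Repeated application of Rolle's theorem to $P$, combined with passage to reciprocal polynomials $x^{\deg Q} Q(1/x)$, preserves the property of having only real roots while sliding the three-term window of coefficients. After suitably many such operations one isolates a quadratic proportional to
$$\binom{n}{k-1} p_{k-1}\, x^2 + 2\binom{n}{k} p_k\, x + \binom{n}{k+1} p_{k+1},$$
whose nonnegative discriminant (required for real roots) simplifies, once the binomial coefficients are cancelled, to precisely $p_k^2 \geq p_{k-1} p_{k+1}$.

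Next I would pass to logarithms: setting $q_k = \log p_k$, Newton's inequalities read $2 q_k \geq q_{k-1} + q_{k+1}$, so $(q_k)_{k=0}^n$ is a concave sequence with $q_0 = 0$. Concavity together with $q_0 = 0$ forces $q_k/k$ to be non-increasing, since writing $k = \tfrac{k}{k+1}(k+1) + \tfrac{1}{k+1}\cdot 0$ and applying concavity gives $q_k \geq \tfrac{k}{k+1} q_{k+1}$, which rearranges to $q_k/k \geq q_{k+1}/(k+1)$. Exponentiating yields $p_k^{1/k} \geq p_{k+1}^{1/(k+1)}$ for $1 \leq k \leq n-1$, which is exactly the Maclaurin chain claimed.

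For the equality case, equality anywhere in the chain propagates back to equality in at least one Newton inequality, which forces the extracted quadratic's discriminant to vanish and hence collapses the original polynomial $P$ to $(1 + ax)^n$, giving $a_1 = \cdots = a_n$; conversely, if all $a_i = a$ then every $p_k = a^k$, so $p_k^{1/k} = a$ uniformly. The main obstacle will be the Rolle-and-reciprocal bookkeeping used to extract Newton's inequalities cleanly: if that route becomes unwieldy I would fall back on induction on $n$, differentiating $P(x)$ once to produce a polynomial whose negated roots form a new set of $n-1$ positive reals whose elementary symmetric means interpolate those of the original $a_i$, and then invoking the inductive hypothesis to deliver the three-term inequality directly.
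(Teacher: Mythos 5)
The paper does not prove this lemma at all: it is quoted verbatim from the reference \cite{Biler} as a classical fact (Maclaurin's inequality), so there is no internal proof to compare your argument against. Your proposal is the standard and correct derivation: Newton's inequalities $p_k^2 \geq p_{k-1}p_{k+1}$ from real-rootedness of $\prod_i(1+a_ix)$, then log-concavity of $(\log p_k)$ with $\log p_0 = 0$ forcing $\frac{1}{k}\log p_k$ to be non-increasing via the chord inequality $q_k \geq \frac{k}{k+1}q_{k+1} + \frac{1}{k+1}q_0$. Two spots deserve a little more care than your sketch gives them. First, the Rolle-and-reciprocal reduction to a quadratic is the fussiest part of Newton's inequalities; your fallback (induction on $n$, using that the roots of $P'$ are $n-1$ positive reals whose elementary symmetric means agree with the $p_k$ of the original $a_i$ up to the index shift) is cleaner and is the route most texts take. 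Second, in the equality case you need the observation that equality at one link $p_k^{1/k} = p_{k+1}^{1/(k+1)}$ forces the concave sequence $(q_j)$ to be \emph{linear} on $\{0,1,\ldots,k+1\}$, hence equality in some Newton inequality with $k\geq 1$; and for strictly positive $a_i$ (which is the hypothesis here) equality in a Newton inequality does force all $a_i$ equal, though this implication fails for general reals, so the positivity must be invoked explicitly. With those caveats the argument is complete and proves exactly the cited statement.
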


\begin{lemma} \label{1w2} Let $\lambda_1,\,\lambda_2,\ldots,\,\lambda_{2k}$ be the eigenvalues of $Q_S$. Then $|\lambda_1|=|\lambda_2|=\cdots=|\lambda_{2k}|$ if and only if $G\cong K_2$.
\end{lemma}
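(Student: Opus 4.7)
The plan is to use the simple but powerful combination of Theorem 2.1 (which gives that $-1$ is always an eigenvalue of $Q_S$) and Lemma 2.1 (which computes $\operatorname{trace}(Q_S^2) = n^2-2n+2k$).

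For the forward direction, I would assume $|\lambda_1|=|\lambda_2|=\cdots=|\lambda_{2k}|=c$ for some $c\geq 0$. Since Theorem 2.1 guarantees that $-1$ is an eigenvalue of $Q_S$, this common absolute value must be $c=1$. Hence
\[
\operatorname{trace}(Q_S^2)=\sum_{i=1}^{2k}\lambda_i^2 = 2k.
\]
Combining with Lemma 2.1, I get $n^2-2n+2k=2k$, i.e., $n(n-2)=0$, which forces $n=2$. Since $G$ is a connected chain graph on $2$ vertices, we conclude $G\cong K_2$.

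For the reverse direction, a direct computation suffices: when $G\cong K_2$ we have $k=1$ and $s_1=t_1=1$, so
\[
Q_S=\begin{bmatrix}0 & -1\\ -1 & 0\end{bmatrix},
\]
whose eigenvalues are $\pm 1$, giving $|\lambda_1|=|\lambda_2|=1$.

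There is essentially no obstacle here; the proof is immediate once the two earlier results are invoked in the right order. The only thing to be slightly careful about is the use of Theorem 2.1 to pin down the common absolute value as exactly $1$ (rather than leaving it as an unspecified constant $c$), since this is what allows the trace computation to collapse to the clean equation $n^2=2n$.
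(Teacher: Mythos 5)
Your proof is correct, but it takes a genuinely different route from the paper's. You combine the fact that $-1$ is an eigenvalue of $Q_S$ (so the common absolute value must be $1$) with the trace identity $\operatorname{trace}(Q_S^2)=n^2-2n+2k$ from Lemma \ref{Chain_Quotient_Lm1}, which collapses to $n(n-2)=0$ and hence $n=2$. The paper instead invokes its Rayleigh-quotient lower bound $\lambda_1(G)\geq \frac{n}{2}+1-\frac{2k}{n}$ (Theorem \ref{1w1}): since $2k\leq n$, this gives $\lambda_1\geq \frac{n}{2}\geq 1.5>1$ whenever $n\geq 3$, contradicting $|\lambda_1|=|{-1}|$, so again $n=2$. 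Your argument is arguably the more economical of the two, since it relies only on the elementary trace computation rather than on the separate eigenvalue bound, and it produces the exact equation $n^2=2n$ rather than an inequality-plus-case-split; the paper's version has the mild advantage of reusing a bound it needs elsewhere anyway. Both correctly identify that the key step is pinning the common absolute value to exactly $1$ via the guaranteed eigenvalue $-1$, and both finish the converse by the same direct computation for $K_2$.
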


\begin{proof} First we assume that $|\lambda_1|=|\lambda_2|=\cdots=|\lambda_{2k}|$. Since $\lambda_i$ $(1\leq i\leq 2k)$ are the eigenvalues of $Q_S$, then by Theorem \ref{Chain_quotient_th1}, there is an eigenvalue, say, $\lambda_j=-1$. By Theorem \ref{1w1}, we have $\lambda_1\geq\frac{n}{2}+1-\frac{2k}{n}$. If $n\geq 3$, then $|\lambda_1|\geq 1.5>1=|\lambda_j|$, a contradiction as $|\lambda_1|=|\lambda_2|=\cdots=|\lambda_{2k}|$. Otherwise, $n=2$ and hence $G\cong K_2$.

\vspace*{2mm}

Conversely, one can easily see that $|\lambda_1|=|\lambda_2|$ for $K_2$. 
\end{proof}
We are now at a position to establish the following bounds of Seidel energy of a chain graph.
\begin{theorem} \label{Chain_Energy_Th1} Let $G$ be a chain graph of order $n$ with $|\pi|=2k$. Then
$$SE(G) \leq n-2k+\sqrt{2k(n^2-2n+2k)}$$
with equality if and only if $G\cong K_2$.
\end{theorem}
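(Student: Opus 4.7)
The plan is to express $SE(G)$ as a sum of two pieces that we can bound separately, using the earlier structural results about $Q_S$ and the multiplicity of $-1$.

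First, I would invoke Theorem \ref{Chain_pm1_Ch1} together with the observation (already made in Section 2) that every eigenvalue of $Q_S$ lifts through the characteristic matrix $P$ to an eigenvalue of $S$. The range of $P$ is a $2k$-dimensional $S$-invariant subspace on which $S$ acts (in the basis coming from $P$) as $Q_S$, while the orthogonal complement is spanned by the $(n-2k)$ vectors $X_{s_i}(j)$ and $Y_{t_i}(j)$, all of which lie in the $(-1)$-eigenspace. Consequently the multiset of Seidel eigenvalues of $G$ is the disjoint union of the spectrum of $Q_S$ and $n-2k$ additional copies of $-1$. This gives the decomposition
\begin{equation*}
SE(G) \;=\; (n-2k) \;+\; \sum_{i=1}^{2k} |\lambda_i(Q_S)|.
\end{equation*}
A quick sanity check: the trace of $S$ is $0$, matching $\mathrm{trace}(Q_S)+(n-2k)(-1) = (n-2k)-(n-2k) = 0$, and $\mathrm{trace}(S^2)=n(n-1)$ matches Lemma \ref{Chain_Quotient_Lm1} together with the extra $(n-2k)$ contribution.

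Next, I would apply the Cauchy--Schwarz inequality to the $2k$ numbers $|\lambda_i(Q_S)|$:
\begin{equation*}
\sum_{i=1}^{2k} |\lambda_i(Q_S)| \;\leq\; \sqrt{2k}\,\sqrt{\sum_{i=1}^{2k} \lambda_i(Q_S)^2} \;=\; \sqrt{2k\,\mathrm{trace}(Q_S^2)}.
\end{equation*}
Substituting $\mathrm{trace}(Q_S^2)=n^2-2n+2k$ from Lemma \ref{Chain_Quotient_Lm1} yields exactly the claimed upper bound $SE(G)\le n-2k+\sqrt{2k(n^2-2n+2k)}$.

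For the equality case, Cauchy--Schwarz is tight precisely when $|\lambda_1(Q_S)|=|\lambda_2(Q_S)|=\cdots=|\lambda_{2k}(Q_S)|$. By Lemma \ref{1w2}, this forces $G\cong K_2$. The converse is immediate: for $K_2$ we have $n=2$, $k=1$, and the Seidel eigenvalues are $\pm 1$, giving $SE(K_2)=2$, which matches $0+\sqrt{2\cdot 2}=2$. I do not anticipate a major obstacle; the main delicate point is verifying the spectrum decomposition cleanly (i.e., that the $X_{s_i}(j), Y_{t_i}(j)$ really exhaust the orthogonal complement of $\mathrm{range}(P)$ inside the $(-1)$-eigenspace), but this is already guaranteed by the exact multiplicity count $n_{-1}(S)=n-2k+1$ from Theorem \ref{Chain_pm1_Ch1} combined with the simplicity of $-1$ in $Q_S$ from Theorem \ref{Chain_quotient_th1}.
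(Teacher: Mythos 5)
Your proposal is correct and follows essentially the same route as the paper: the same decomposition $SE(G)=(n-2k)+\sum_{i=1}^{2k}|\lambda_i(Q_S)|$ justified by Theorems \ref{Chain_quotient_th1} and \ref{Chain_pm1_Ch1}, the same use of Lemma \ref{Chain_Quotient_Lm1} for $\mathrm{trace}(Q_S^2)$, and the same equality analysis via Lemma \ref{1w2}. The only cosmetic difference is that you invoke Cauchy--Schwarz directly, whereas the paper reaches the identical inequality $\bigl(\sum_i|\lambda_i|\bigr)^2\leq 2k\sum_i\lambda_i^2$ through the Maclaurin inequality $p_1\geq p_2^{1/2}$ of Lemma \ref{Lemma_Biler}, with the same equality condition.
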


\begin{proof} The multiplicity of the Seidel eigenvalue $-1$ is $n-2k+1$, in which $Q_S$ contributes one simple eigenvalue $-1$. Therefore we can write the Seidel energy in the following form:
\begin{equation}\label{Chain_Energy_eq1}
 SE(G)=n-2k+\sum\limits^{2k}_{i=1}\,|\lambda_i|,
\end{equation}
where $\lambda_1,\,\lambda_2,\ldots,\,\lambda_{2k}$ are the eigenvalues of $Q_S$. By Lemma \ref{Chain_Quotient_Lm1}, we obtain
\begin{equation}
\sum\limits^{2k}_{i=1}\,\lambda^2_i=trace\ Q_S^2=n^2-2n+2k.\label{1pm1}
\end{equation}
Setting $n=2k$ and $a_i=|\lambda_i|$ $(i=1,\,2,\ldots,\,2k)$ in Lemma \ref{Lemma_Biler}, we obtain
\begin{equation}
\left(\frac{\sum\limits^{2k}_{i=1}\,|\lambda_i|}{2k}\right)^2\geq \frac{1}{\displaystyle{\frac{2k(2k-1)}{2}}}\,\sum\limits_{i<j}\,|\lambda_i|\,|\lambda_j|,\label{1w3}
\end{equation}
that is,
\begin{align*}
(2k-1)\,\left(\sum\limits^{2k}_{i=1}\,|\lambda_i|\right)^2&\geq 4k\,\sum\limits_{i<j}\,|\lambda_i|\,|\lambda_j|,\\
&=2k\,\left[\left(\sum\limits^{2k}_{i=1}\,|\lambda_i|\right)^2-\sum\limits^{2k}_{i=1}\,\lambda^2_i\right],
\end{align*}
that is,
  $$\left(\sum\limits^{2k}_{i=1}\,|\lambda_i|\right)^2\leq 2k\,\sum\limits^{2k}_{i=1}\,\lambda^2_i=2k\,trace\ Q_S^2=2k\,(n^2-2n+2k),$$
that is,
 $$\sum\limits^{2k}_{i=1}\,|\lambda_i|\leq \sqrt{2k\,(n^2-2n+2k)}.$$
Combining the above result with equation (\ref{Chain_Energy_eq1}), we obtain the required result.

\vspace*{3mm}

Moreover, the equality holds if and only if the equality holds in (\ref{1w3}), that is, if and only if $|\lambda_1|=|\lambda_2|=\cdots=|\lambda_{2k}|$, that is, if and only if $G\cong K_2$, by Lemma \ref{1w2}.
\end{proof}

\begin{remark} {\rm Our upper bound in Theorem \ref{Chain_Energy_Th1} is better than the upper bound given in (\ref{Haemers}), that is,
$$n-2k+\sqrt{2k\,(n^2-2n+2k)}\leq n\,\sqrt{n-1},$$
that is,
 $$2kn^2\leq n^3-2\,(n-2k)\,n\,\sqrt{n-1},$$
that is,
$$2\,(n-2k)\,n\,\sqrt{n-1}\leq n^2\,(n-2k),$$
that is,
$$(n-2)^2\geq 0,$$
which is always true.}
\end{remark}

\begin{theorem}\label{Chain_Energy_Th2} Let $G$ be a chain graph of order $n$ with $|\pi|=2k$. Then
$$SE(G) \geq n-2k+\sqrt{n^2-2n+2k+2k(2k-1)|\det Q_S|^{\frac{1}{k}}}. $$
\end{theorem}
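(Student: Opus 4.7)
The plan is to mimic the structure of Theorem 4.4 but instead of using Maclaurin's inequality between $p_1$ and $p_2$, use it between $p_2$ and $p_n$ (with $n=2k$) to pull in the determinant of $Q_S$. First, by Theorem 2.4 the Seidel eigenvalue $-1$ contributes exactly $n-2k+1$ to the multiplicity and exactly one of these copies of $-1$ is an eigenvalue of $Q_S$, so I can write
\[
SE(G)=n-2k+\sum_{i=1}^{2k}|\lambda_i|,
\]
where $\lambda_1,\dots,\lambda_{2k}$ are the eigenvalues of $Q_S$. The problem therefore reduces to bounding $\sum_{i=1}^{2k}|\lambda_i|$ from below.

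Next I would expand
\[
\Bigl(\sum_{i=1}^{2k}|\lambda_i|\Bigr)^{2}=\sum_{i=1}^{2k}\lambda_i^{2}+2\sum_{1\le i<j\le 2k}|\lambda_i|\,|\lambda_j|,
\]
and handle the two sums separately. The first one is already known from Lemma 2.1: $\sum_{i=1}^{2k}\lambda_i^2=\operatorname{trace}Q_S^2=n^2-2n+2k$. For the second, I apply Lemma 3.2 with $a_i=|\lambda_i|$ and $n$ replaced by $2k$, using the inequality $p_2^{1/2}\ge p_{2k}^{1/(2k)}$, i.e.
\[
\frac{1}{\binom{2k}{2}}\sum_{i<j}|\lambda_i|\,|\lambda_j|\ \ge\ \Bigl(\prod_{i=1}^{2k}|\lambda_i|\Bigr)^{2/(2k)}=|\det Q_S|^{1/k}.
\]
Multiplying by $2\binom{2k}{2}=2k(2k-1)$ yields $2\sum_{i<j}|\lambda_i||\lambda_j|\ge 2k(2k-1)|\det Q_S|^{1/k}$.

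Combining the two estimates gives
\[
\Bigl(\sum_{i=1}^{2k}|\lambda_i|\Bigr)^{2}\ \ge\ n^2-2n+2k+2k(2k-1)|\det Q_S|^{1/k},
\]
and taking square roots and substituting back into the expression for $SE(G)$ produces the claimed lower bound. The only subtlety (and the main place to be careful) is the application of Lemma 3.2: it is stated for positive reals, so one should note that if some $\lambda_i=0$ then $\det Q_S=0$ and the inequality becomes $p_2^{1/2}\ge 0$, which is trivially satisfied, so the bound still holds with the convention $|\det Q_S|^{1/k}=0$. Apart from this mild degeneration, the argument is a short and clean computation with no essential obstacle.
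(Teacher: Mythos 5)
Your proposal is correct and follows essentially the same route as the paper: the decomposition $SE(G)=n-2k+\sum_{i=1}^{2k}|\lambda_i|$, the trace identity $\sum\lambda_i^2=n^2-2n+2k$ from Lemma 4.1, and the Maclaurin inequality $p_2^{1/2}\geq p_{2k}^{1/(2k)}$ applied to $a_i=|\lambda_i|$ to bound $2\sum_{i<j}|\lambda_i||\lambda_j|$ below by $2k(2k-1)|\det Q_S|^{1/k}$. Your remark about the degenerate case $\lambda_i=0$ (where the positivity hypothesis of the Maclaurin lemma fails but the bound survives trivially) is a small point the paper does not address explicitly.
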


\begin{proof} Since $\lambda_1,\,\lambda_2,\ldots,\,\lambda_{2k}$ are the eigenvalues of $Q_S$, we have $\prod^{2k}_{i=1}\,|\lambda_i|=|\det Q_S|$. Setting $n=2k$ and $a_i=|\lambda_i|$ $(i=1,\,2,\ldots,\,2k)$ in Lemma \ref{Lemma_Biler}, we obtain
\begin{equation}
\frac{1}{\displaystyle{\frac{2k(2k-1)}{2}}}\,\sum\limits_{i<j}\,|\lambda_i|\,|\lambda_j|\geq \left(\prod^{2k}_{i=1}\,|\lambda_i|\right)^{\frac{1}{k}},\label{1w4}
\end{equation}
that is,
   $$2\,\sum\limits_{i<j}\,|\lambda_i|\,|\lambda_j|\geq 2k(2k-1)\,|\det Q_S|^{\frac{1}{k}},$$
that is,
  $$\left(\sum\limits^{2k}_{i=1}\,|\lambda_i|\right)^2-\sum\limits^{2k}_{i=1}\,\lambda^2_i\geq 2k(2k-1)\,|\det Q_S|^{\frac{1}{k}},$$
that is,
   $$\sum\limits^{2k}_{i=1}\,|\lambda_i|\geq \sqrt{n^2-2n+2k+2k(2k-1)\,|\det Q_S|^{\frac{1}{k}}},$$
by (\ref{1pm1}). Combining this result with equation (\ref{Chain_Energy_eq1}), we can get the required result.

\vspace*{3mm}

Moreover, the equality holds if and only if the equality holds in (\ref{1w4}), that is, if and only if $|\lambda_1|=|\lambda_2|=\cdots=|\lambda_{2k}|$, that is, if and only if $G\cong K_2$, by Lemma \ref{1w2}.
\end{proof}

\begin{remark} {\rm This lower bound is better than Haemers' lower bound given in (\ref{Haemers}). For example, consider a chain graph $0^11^20^11^1$ on $5$ vertices. Using Theorem \ref{Chain_Energy_Th2} and the determinant formula given in Remark \ref{Chain_det_rm1}, we obtain $E(S)\geq 8.78$. On the other hand, inequality (\ref{Haemers}) gives $E(S)\geq 8$.}
\end{remark}
We now establish another upper bound for the Seidel energy of  chain graphs.
\begin{theorem}\label{Chain_Energy_Th3} Let $G$ be a chain graph of order $n$ with $|\pi|=2k$. Then
$$ SE(G)\leq n(3+\sqrt{k})-2.$$
\end{theorem}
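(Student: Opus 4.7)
The plan is to split $S = (J-I) + (-2A)$ and apply Fan's Lemma (Lemma~\ref{energy_Lemma_Fan}), which reduces the claim to a bound on the adjacency energy $E(A)$. This gives $E(S) \leq E(J-I) + 2E(A)$. The matrix $J - I$ has eigenvalue $n-1$ (once) and $-1$ (with multiplicity $n-1$), so $E(J-I) = 2(n-1)$. It therefore suffices to show that $E(A) \leq n(1+\sqrt{k})/2$.

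To bound $E(A)$, I will exploit the bipartite block structure $A = \begin{pmatrix} 0 & B \\ B^T & 0 \end{pmatrix}$, where $B$ is the $p \times q$ biadjacency matrix of the chain graph, $p = \sum s_i$, $q = \sum t_i$, $p+q=n$. I split $A = A_1 + A_2$ with $A_1 = \tfrac{1}{2}\begin{pmatrix} 0 & J_{p \times q} \\ J_{q \times p} & 0 \end{pmatrix}$ and $A_2 = A - A_1 = \tfrac{1}{2}\begin{pmatrix} 0 & 2B - J_{p \times q} \\ (2B - J_{p \times q})^T & 0 \end{pmatrix}$. Fan's Lemma gives $E(A) \leq E(A_1) + E(A_2)$. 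The matrix $A_1$ has only two nonzero eigenvalues $\pm\tfrac{1}{2}\sqrt{pq}$, so $E(A_1) = \sqrt{pq} \leq n/2$ by AM--GM, while $E(A_2) = \sum_i \sigma_i(2B - J)$, so the task reduces to controlling the sum of singular values of $2B - J$.

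The key observation is that, for a chain graph, $\operatorname{rank}(2B - J) \leq k$. Indeed, the distinct columns of $B$ are the indicators $\chi_j$ of $V_{s_1} \cup \cdots \cup V_{s_j}$ for $1 \leq j \leq k$, which are linearly independent, so $\operatorname{rank}(B) = k$. Next, because every vertex in $V_{s_1}$ is adjacent to every one-vertex, every row of $B$ indexed by $V_{s_1}$ equals $\mathbf{1}_q^T$; in particular $\mathbf{1}_q^T$ lies in the row-span of $B$, and therefore so does every row of $J_{p \times q} = \mathbf{1}_p \mathbf{1}_q^T$. Explicitly, $J_{p \times q} = (\mathbf{1}_p e_1^T)B$, so $2B - J = (2I_p - \mathbf{1}_p e_1^T)B$, whose rank is at most $\operatorname{rank}(B) = k$. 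Since each entry of $2B - J$ is $\pm 1$, $\|2B-J\|_F^2 = pq$, and Cauchy--Schwarz applied to its at most $k$ nonzero singular values yields
\[
\sum_i \sigma_i(2B-J) \;\leq\; \sqrt{k\cdot pq} \;\leq\; \frac{n\sqrt{k}}{2},
\]
i.e.\ $E(A_2) \leq n\sqrt{k}/2$. Combining, $E(A) \leq n/2 + n\sqrt{k}/2 = n(1+\sqrt{k})/2$, and therefore $E(S) \leq 2(n-1) + n(1+\sqrt{k}) = n(3+\sqrt{k}) - 2$.

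\textbf{Main obstacle.} The delicate step is the rank reduction $\operatorname{rank}(2B - J) \leq k$ rather than the generic bound $k+1$ coming from $\operatorname{rank}(2B) + \operatorname{rank}(J)$. This rests entirely on the chain-graph property that the first block $V_{s_1}$ (equivalently, the last block $V_{t_k}$) of the binary string forces a row (column) of $B$ to be all-ones, so that $J$ already lies in the row- (column-) span of $B$. The remaining steps are routine applications of Fan's Lemma, AM--GM, and Cauchy--Schwarz for singular values.
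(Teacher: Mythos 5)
Your proof is correct, and its opening move coincides with the paper's: both split $S=(J-I)-2A$, apply Fan's lemma to get $SE(G)\leq 2(n-1)+2E(A(G))$, and then need $E(A(G))\leq \tfrac{n}{2}(1+\sqrt{k})$ (the paper writes this as $\tfrac{n}{\sqrt{8}}(\sqrt{2k}+\sqrt{2})$, which is the same quantity). The difference is what happens next: the paper simply cites this adjacency-energy bound from the earlier work of Das, Alazemi and An\dj eli\'c and is done, whereas you prove it from scratch. Your derivation is sound: the second application of Fan's lemma to $A=A_1+A_2$ with $E(A_1)=\sqrt{pq}\leq n/2$ is fine; the structural facts that every column of $B$ is the indicator of $V_{s_1}\cup\cdots\cup V_{s_j}$ (so $\operatorname{rank}B=k$ by nestedness) and that the rows indexed by $V_{s_1}$ are all-ones (so $J_{p\times q}=(\mathbf{1}_p e_1^T)B$ and $\operatorname{rank}(2B-J)\leq k$) are correct consequences of the chain-graph construction; and the Cauchy--Schwarz step $\sum_i\sigma_i(2B-J)\leq\sqrt{k\,\|2B-J\|_F^2}=\sqrt{kpq}\leq n\sqrt{k}/2$ is exactly right since $2B-J$ is a $\pm1$ matrix. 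What your version buys is a self-contained proof of the theorem that makes the role of the parameter $k$ transparent (it enters only through the rank of $2B-J$), and it also shows where slack remains (e.g.\ the AM--GM step $\sqrt{pq}\leq n/2$); what the paper's version buys is brevity at the cost of an external dependency.
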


\begin{proof} Seidel matrix $S$ of $G$ is given by
$$S(G)=J-I-2A(G).$$
where $J$ is all $1$ matrix, $I$ is the identity matrix, and $A(G)$ is the adjacency matrix of $G$.
Using Lemma \ref{energy_Lemma_Fan}, we can write
\begin{equation}
\label{Chain_Energy_eq5}
SE(G)\leq E(J-I)+2E(A(G))=2(n-1)+2E(A(G)),
\end{equation}
where $E(J-I)=E(A(K_n))=\sum\limits^n_{i=1}\,|\lambda_i(A(K_n))|=2(n-1)$ and $E(A(G))=\sum\limits^n_{i=1}\,|\lambda_i(A(G))|$.
From \cite{Andelic 3}, we obtain
      $$E(A(G))\leq \frac{n}{\sqrt{8}}(\sqrt{2k}+\sqrt{2}).$$
Using this result in inequality (\ref{Chain_Energy_eq5}), we obtain
$$SE(G)\leq 2(n-1)+ \frac{2n}{\sqrt{8}}(\sqrt{2k}+\sqrt{2}),$$
which gives the required result.
\end{proof}

We now consider a special case of chain graph. We call a chain graph symmetric if $s_i=t_i=p$ for $1\leq i\leq k$ (for this case only the quotient matrix of the corresponding chain graph is a symmetric matrix). We have calculated the characteristic polynomial and energy of Seidel matrix $S$ for $k=1,\,2,\,3$ in the following example.
\begin{example}[Symmetric chain graphs] {\rm
\textbf{For k=1}, the binary string of $G$ is $b=0^p1^p$.
The Seidel eigenvalues are $-1^{2p-1},~ 2p-1$ and the characteristic polynomial is
$$\psi_S(x)=(x+1)^{2p-1}(x+1-2p),$$
and $$SE(G)=2(2p-1).$$

\noindent
\textbf{For k=2}, the binary string is $b=0^p1^p0^p1^p$. The Seidel eigenvalues are $-1^{4p-3},~ 2p-1,~ p-1\pm p \sqrt 5 $ and the characteristic polynomial is
$$\psi_S(x)=(x+1)^{4p-3}(x+1-2p)\big[(1+x)^2-2p(x+1)-4p^2\big],$$
and
$$SE(G)=2(3+\sqrt{5})p-6.$$

\noindent
\textbf{For k=3}, the binary string of $G$ is $b=0^p1^p0^p1^p0^p1^p$. The Seidel
characteristic polynomial of $G$ is
$$\psi_S(x)=(x+1)^{6p-5}\big[(1+x)^2-2p(x+1)-4p^2\big]\big[(x+1)^3-4p(x+1)^2-4p^2(x+1)+8p^3\big],$$
and the Seidel energy of $G$ is given by
$$SE(G)=(10+2\sqrt{5})p-6-2\alpha,$$
where $\alpha$ is the only negative root of the equation
$$x^3-4px^2-4p^2x+8p^3=0.$$}
\end{example}

\vspace*{3mm}

Let $\Gamma_{n,k}$ be a class of chain graphs of order $n$ with the binary string $b=0^{s_1} \underbrace{1^{1} 0^{1} 1^{1} 0^{1}\ldots 0^{1}}_{2k-2} 1^{t_k}$ such that $s_1+t_k=n-2k+2$ with $1\leq s_1\leq n-2k+1$ and $1\leq t_k\leq n-2k+1$.
\begin{lemma}\label{Chain_Energy_Lm1}
Every chain graph in $\Gamma_{n,k}$ has the same spectrum.
\end{lemma}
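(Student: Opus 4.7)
The plan is to invoke the explicit formula for $\psi_S(x)$ from Theorem \ref{Chain_cp_th1} and observe that, for every $G \in \Gamma_{n,k}$, all ingredients of that formula depend only on $n$ and $k$, not on the individual values of $s_1$ and $t_k$. Since two graphs are cospectral iff their characteristic polynomials coincide, this immediately gives the result.

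First I would spell out what the binary string imposes on the block sizes. For $G \in \Gamma_{n,k}$ we have
$$t_1 = s_2 = t_2 = s_3 = \cdots = t_{k-1} = s_k = 1,$$
while $s_1$ and $t_k$ are free subject to $s_1 + t_k = n - 2k + 2$. Thus among the $2k$ block sizes, only the first and last can vary, and only their sum is constrained.

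Next I would apply Theorem \ref{Chain_cp_th1} and examine $U_{x+1}$ and $V_{x+1}$ entry by entry. The matrix $U_{x+1}$ has size $(2k-3) \times (2k-3)$ and its diagonal is $2s_2, 2t_2, 2s_3, 2t_3, \ldots, 2t_{k-1}, 2s_k$; all of these subscripts lie in the middle block region, so every diagonal entry equals $2$. Hence $U_{x+1}$ is a fixed tridiagonal matrix determined by $k$ and $x$ alone, and $\det U_{x+1}$ is independent of the choice of $(s_1, t_k)$. Similarly, $V_{x+1}$ has size $(2k-1) \times (2k-1)$ with diagonal $2t_1, 2s_2, 2t_2, \ldots, 2s_k, 2(s_1+t_k)$; the first $2k-2$ diagonal entries all equal $2$, and the last equals $2(n - 2k + 2)$. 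So $V_{x+1}$, and consequently $\det V_{x+1}$, depends only on $n, k, x$.

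Plugging these observations into
$$\psi_S(x) = \frac{(x+1)^{n-2k+1}}{2}\Bigl[2(x+1)^{2k-1} + (-1)^{k-1}(x+1)^2 \det U_{x+1} + (-1)^k \det V_{x+1}\Bigr],$$
every factor on the right is seen to be a function of $n, k, x$ only. Therefore $\psi_S(x)$ is identical for all graphs in $\Gamma_{n,k}$, which is exactly the statement that they share a common Seidel spectrum. There is no real obstacle here beyond careful bookkeeping of which subscripts appear on the diagonals of $U_{x+1}$ and $V_{x+1}$; the main virtue of the approach is that Theorem \ref{Chain_cp_th1} has already done the heavy lifting of reducing the characteristic polynomial to two small tridiagonal determinants.
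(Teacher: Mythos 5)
Your argument is correct, and it takes a genuinely different route from the paper. You derive the lemma as a corollary of the characteristic polynomial formula in Theorem \ref{Chain_cp_th1}: for $G\in\Gamma_{n,k}$ the interior block sizes $t_1=s_2=\cdots=s_k=1$ force every diagonal entry of $U_{x+1}$ and all but the last diagonal entry of $V_{x+1}$ to equal $2$, while the last diagonal entry of $V_{x+1}$ is $2(s_1+t_k)=2(n-2k+2)$; hence $\psi_S(x)$ depends only on $n$, $k$, and $x$, and cospectrality follows. The paper instead works directly with the eigensystem of the quotient matrix $Q_S$: for an eigenvalue $\lambda\neq -1$ it extracts $x_1=-x_{2k}$ from the first and last equations, substitutes $s_1+t_k=n-2k+2$, and observes that the resulting reduced system of $2k-1$ equations is independent of the individual values of $s_1$ and $t_k$, while Theorem \ref{Chain_pm1_Ch1} fixes the multiplicity of $-1$ at $n-2k+1$. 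Your approach is shorter and handles multiplicities automatically (equality of characteristic polynomials is stronger than equality of the sets of roots, a point the paper's phrasing ``provides the same values of $\lambda$'' leaves slightly implicit), at the cost of leaning on the full strength of Theorem \ref{Chain_cp_th1}. One small caveat: Theorem \ref{Chain_cp_th1} is stated only for $k\geq 2$, so if $\Gamma_{n,1}$ (the complete bipartite case) is meant to be included you should dispose of it separately, e.g.\ by noting that every $K_{a,b}$ on $n$ vertices has Seidel spectrum $\{n-1,\,(-1)^{n-1}\}$.
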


\begin{proof} Let $G\in\Gamma_{n,k}$. Then $b=0^{s_1} \underbrace{1^{1} 0^{1} 1^{1} 0^{1}\ldots 0^{1}}_{2k-2} 1^{t_k}$ is the binary string of chain graph $G$. Then the multiplicity of $-1$ is $n-2k+1$ and $-1$ is a simple eigenvalue of $Q_S$.
 Let $\lambda\neq -1$ be an eigenvalue of $Q_S$ and let $X=\left[\begin{array}{ccccc}
x_1&x_2&x_3&\cdots&x_{2k}
\end{array}\right]^T$ be corresponding eigenvector. Then $Q_S X=\lambda X$, that is,
\begin{equation}
\tag{B}
\begin{aligned}
(s_1 -1)x_1-x_2+x_3 -x_4+\cdots+x_{2k-1}-t_kx_{2k}&=\lambda x_1,\\
-s_1x_1+x_3+x_4+\cdots+x_{2k-1} +t_kx_{2k}&=\lambda x_2,\\
s_1x_1+x_2 -x_4+\cdots+x_{2k-1} -t_kx_{2k}&=\lambda x_3,\\
-s_1x_1+x_2-x_3+\cdots+x_{2k-1} +t_kx_{2k}&=\lambda x_4,\\
\ldots~~\ldots~~\ldots~~\ldots~~~\ldots~~\ldots~~~~~~~~~&~~\vdots\\
s_1x_1+x_2+x_3+x_4+\cdots+x_{2k-2}-t_kx_{2k}&=\lambda x_{2k-1},\\
-s_1x_1+x_2-x_3+x_4+\cdots-x_{2k-1} +(t_k-1)x_{2k}&=\lambda x_{2k}.
\end{aligned}
\end{equation}
Since $\lambda\neq -1$, from the first and last equations, we obtain
$$x_1=-x_{2k}.$$
Using $x_1=-x_{2k}$ and $s_1+t_k=n-2k+2$ in the system of equations (B), we obtain the following system of $2k-1$ equations:
\begin{equation}
\tag{C}
\begin{aligned}
(n-2k+1)x_1-x_2+x_3 -x_4+\ldots+x_{2k-1}&=\lambda x_1,\\
-(n-2k+2)x_1+x_3+x_4+\ldots +x_{2k-1}&=\lambda x_2,\\
(n-2k+2)x_1+x_2 -x_4+\ldots +x_{2k-1}&=\lambda x_3,\\
-(n-2k+2)x_1+x_2-x_3+\ldots +x_{2k-1}&=\lambda x_4,\\
\ldots~~\ldots~~\ldots~~\ldots~~~\ldots~~\ldots~~~~~~~~~~~~&~~\vdots\\
(n-2k+2)x_1+x_2+x_3+x_4+\ldots +x_{2k-2}&=\lambda x_{2k-1}.\end{aligned}
\end{equation}
The above system of equations (C) is independent of $s_1$ and $t_k$. Therefore system provides the same values of $\lambda$  for every choice of $s_1$ and $t_k$ with $s_1+t_k=n-2k+2$. Therefore every chain graph in $\Gamma_{n,k}$ has the same spectrum.
\end{proof}

\begin{corollary} \label{1w5} Every chain graph in $\Gamma_{n,k}$ has the same Seidel energy.
\end{corollary}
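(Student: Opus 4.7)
The plan is to derive this corollary immediately from Lemma \ref{Chain_Energy_Lm1}. Recall that the Seidel energy $SE(G)$ is defined purely as the sum of absolute values of the Seidel eigenvalues of $G$, so it is an invariant of the Seidel spectrum alone. Since Lemma \ref{Chain_Energy_Lm1} asserts that every chain graph in $\Gamma_{n,k}$ has the same Seidel spectrum, their Seidel energies must coincide. There is essentially no nontrivial content to check beyond this observation.

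More concretely, I would first note that Lemma \ref{Chain_Energy_Lm1} gives us, for every $G \in \Gamma_{n,k}$, the eigenvalue $-1$ with multiplicity $n-2k+1$ (from Theorem \ref{Chain_pm1_Ch1}) together with the $2k-1$ eigenvalues coming from the quotient matrix $Q_S$ other than the simple eigenvalue $-1$, and these $2k-1$ values are determined by the system (C) in the proof of Lemma \ref{Chain_Energy_Lm1}, which depends only on $n$ and $k$ (not on the individual parameters $s_1, t_k$ satisfying $s_1 + t_k = n-2k+2$). Writing $\lambda_1, \lambda_2, \ldots, \lambda_{2k}$ for the eigenvalues of $Q_S$, we therefore have
$$SE(G) = (n-2k) + \sum_{i=1}^{2k} |\lambda_i|,$$
where the right-hand side depends only on $n$ and $k$. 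Hence $SE(G)$ is constant on $\Gamma_{n,k}$.

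I do not anticipate any obstacle here, since the statement is a direct consequence of Lemma \ref{Chain_Energy_Lm1}. The only remark worth making is that the expression for $SE(G)$ above matches the decomposition used in Theorem \ref{Chain_Energy_Th1}, confirming that the cofactor $n-2k$ from the multiplicity of $-1$ together with the $Q_S$-contribution yields a quantity determined entirely by $(n,k)$ once the spectrum is shown to be invariant within $\Gamma_{n,k}$.
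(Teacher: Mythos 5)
Your proposal is correct and follows exactly the route the paper intends: the corollary is stated without a separate proof precisely because, as you observe, the Seidel energy is a function of the Seidel spectrum alone, so Lemma \ref{Chain_Energy_Lm1} (same spectrum for all members of $\Gamma_{n,k}$) immediately yields the same energy. Your additional decomposition $SE(G)=(n-2k)+\sum_{i=1}^{2k}|\lambda_i|$ is consistent with equation (\ref{Chain_Energy_eq1}) and is a harmless elaboration, not a deviation.
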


\begin{lemma}\label{1kank1} Let $H\in \Gamma_{n,2}$. Then
  $$Spec_S(H)=\left\{\frac{n-4\pm\sqrt{n^2+4n-12}}{2},\,1,-1^{n-3}\right\}.$$
\end{lemma}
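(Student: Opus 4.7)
The plan is to apply the characteristic polynomial formula of Theorem~\ref{Chain_cp_th1} with $k=2$. For any $H\in\Gamma_{n,2}$, the binary string has the form $0^{s_1}101^{t_k}$ with $s_1+t_k=n-2$, so $t_1=s_2=1$. By Lemma~\ref{Chain_Energy_Lm1} the spectrum depends only on $n$, so it suffices to compute the characteristic polynomial in terms of these data. The multiplicity of $-1$ is $n-2k+1=n-3$ by Theorem~\ref{Chain_pm1_Ch1}, which already accounts for the exponent on $(x+1)$ in the statement.

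With $k=2$, the matrix $U_{x+1}$ degenerates to the $1\times 1$ block $[2s_2]=[2]$, so $\det U_{x+1}=2$. The matrix $V_{x+1}$ is the $3\times 3$ tridiagonal matrix with diagonal $(2t_1,\,2s_2,\,2(s_1+t_k))=(2,\,2,\,2(n-2))$ and off-diagonal entries $x+1$. A direct cofactor expansion gives
$$\det V_{x+1}=8(n-2)-2(n-1)(x+1)^2.$$
Substituting these into equation~(\ref{Chain_cp_eq1}) yields, after cancelling the factor~$2$,
$$\psi_S(x)=(x+1)^{n-3}\bigl[(x+1)^3-n(x+1)^2+4(n-2)\bigr].$$

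The main (and only) step left is to factor the cubic bracket. Setting $y=x+1$, the bracket becomes $y^3-ny^2+4(n-2)$; inspection shows $y=2$ is a root (giving the Seidel eigenvalue $x=1$), so polynomial division produces
$$y^3-ny^2+4(n-2)=(y-2)\bigl(y^2-(n-2)y-(2n-4)\bigr).$$
The quadratic factor has discriminant $(n-2)^2+8(n-2)=n^2+4n-12$, and substituting back $x=y-1$ gives the two remaining eigenvalues
$$x=\frac{n-4\pm\sqrt{n^2+4n-12}}{2}.$$
Collecting this root, the simple root $x=1$, and the $(n-3)$-fold eigenvalue $-1$ accounts for all $n$ eigenvalues and establishes the claimed spectrum.

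There is no real obstacle here: the argument is purely computational once Theorem~\ref{Chain_cp_th1} is invoked. The only point requiring a small amount of care is bookkeeping of signs in the formula $(-1)^{k-1}$ and $(-1)^k$ for $k=2$, and the observation that $y=2$ is a rational root (so that the cubic actually splits into a linear times a quadratic factor with a clean discriminant).
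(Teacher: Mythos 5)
Your proposal is correct and takes essentially the same route as the paper: both compute the characteristic polynomial $\psi_S(x)=(x+1)^{n-3}\bigl[(x+1)^3-n(x+1)^2+4(n-2)\bigr]$ (the paper writes the cubic in $x$ and factors out $(x-1)$, you work in $y=x+1$ and factor out $(y-2)$, which is the same factorization) and then read off the quadratic's roots via the discriminant $n^2+4n-12$. All of your intermediate determinants and sign bookkeeping check out against the paper's stated polynomial.
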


\begin{proof} Since $H\in \Gamma_{n,2}$, the characteristic polynomial of $S(H)$ is
\begin{align*}
\psi_S(x)&=(x+1)^{n-3}\,\Big[x^3-(s_1+t_2-1)\,x^2-(2s_1+2t_2+1)\,x+3s_1+3t_2-1\Big]\\
&=(x+1)^{n-3}\,(x-1)\,\Big[x^2-(s_1+t_2-2)\,x-3s_1-3t_2+1\Big]\\
&=(x+1)^{n-3}\,(x-1)\,\Big[x^2-(n-4)\,x-3n+7\Big],
\end{align*}
which gives the required result.
\end{proof}

\begin{theorem} \label{1kank2} Let $G$ be a chain graph of order $n$ with the binary string $b=0^{s_1} 1^{t_1} 0^{s_2} 1^{t_2}$ $(s_1+s_2+t_1+t_2=n)$. Then $SE(G)\geq n-2+\sqrt{n^2+4n-12}$ with equality if and only if $G\in \Gamma_{n,2}$.
\end{theorem}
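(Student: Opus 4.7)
The plan is to specialize Theorem~\ref{Chain_cp_th1} to $k=2$, which collapses the problem to optimizing a single scalar. With $k=2$ the block $U_{x+1}=[2s_2]$, and expanding the $3\times3$ tridiagonal determinant gives $\det V_{x+1}=8s_2t_1(s_1+t_2)-2(n-s_2)(x+1)^2$. Substituting into (\ref{Chain_cp_eq1}) and combining the $(x+1)^2$ terms yields
\[
\psi_S(x) = (x+1)^{n-3}\bigl[(x+1)^3 - n(x+1)^2 + c\bigr], \qquad c:=4s_2t_1(s_1+t_2).
\]
Thus, beyond the eigenvalue $-1$ of multiplicity $n-3$ (Theorem~\ref{Chain_pm1_Ch1}), the Seidel spectrum of $G$ is $\{\mu_i=y_i-1\}$ where $y_1\ge y_2\ge y_3$ are the real roots of $g(y):=y^3-ny^2+c$. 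Since $g'(y)=y(3y-2n)$ with local maximum $g(0)=c>0$, sign analysis forces $y_3<0<y_2\le y_1$, and $g$ is strictly decreasing on $(0,2n/3)$; hence $y_2\ge 1$ iff $g(1)=c-n+1\ge 0$.

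Next I would derive a clean formula for $SE(G)$. Assuming $y_2\ge 1$ (justified below), $\mu_1,\mu_2\ge 0$ and $\mu_3<-1$, so using $y_1+y_2+y_3=n$,
\[
SE(G)=(n-3)+(y_1+y_2-2)+(1-y_3)=2(n-2)-2y_3.
\]
Solving $g(y_3)=0$ as $c=ny_3^2-y_3^3$ and differentiating gives $dc/dy_3=y_3(2n-3y_3)<0$ on $y_3<0$, so $y_3$ is a strictly decreasing function of $c$, and therefore $SE(G)$ is a strictly increasing function of $c$. Minimizing $SE(G)$ is thereby reduced to minimizing $c$.

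Finally I would minimize $c=4s_2t_1(s_1+t_2)$ over positive integers with $s_1+t_1+s_2+t_2=n$. The inequality $(s_2-1)(t_1-1)\ge 0$ gives $s_2t_1\ge s_2+t_1-1$, with equality iff $s_2=1$ or $t_1=1$. Setting $u=s_1+t_2\in\{2,\dots,n-2\}$, this yields
\[
c \;\ge\; 4(n-u-1)u,
\]
a concave function of $u$ whose minimum on $[2,n-2]$ is $\min\{8(n-3),4(n-2)\}=4n-8$, attained at $u=n-2$ for $n\ge 4$. Tracking the equality conditions forces $s_2=t_1=1$ and $s_1+t_2=n-2$, i.e.\ exactly $G\in\Gamma_{n,2}$. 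Since $c\ge 4n-8>n-1$ for $n\ge 3$, the earlier assumption $y_2\ge 1$ is automatic. At $c=4n-8$ one factors $g(y)=(y-2)\bigl(y^2-(n-2)y+4-2n\bigr)$ (equivalently, invoke Lemma~\ref{1kank1}), identifying $y_3=\tfrac{n-2-\sqrt{n^2+4n-12}}{2}$ and giving $SE(G)\ge 2(n-2)-2y_3 = n-2+\sqrt{n^2+4n-12}$, with equality iff $G\in\Gamma_{n,2}$.

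The main obstacle is arranging that the two successive inequalities — the integer AM--GM $s_2t_1\ge s_2+t_1-1$ and the concavity bound $4u(n-u-1)\ge 4n-8$ — have aligned equality conditions that jointly cut out precisely the family $\Gamma_{n,2}$; the monotonicity in $c$ and the verification that $y_2\ge 1$ throughout the admissible range of $c$ are, by comparison, routine.
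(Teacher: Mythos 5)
Your proof is correct, and while it starts from the same place as the paper (specializing Theorem \ref{Chain_cp_th1} to $k=2$ to get $\psi_S(x)=(x+1)^{n-3}\bigl[(x+1)^3-n(x+1)^2+4s_2t_1(s_1+t_2)\bigr]$), the second half is genuinely different and in some ways cleaner. The paper splits into $G\in\Gamma_{n,2}$ versus $G\notin\Gamma_{n,2}$, disposes of $n=5$ by an explicit computation on a five-vertex graph, and for $n\ge 6$ proves two claims about the roots of the cubic ($\lambda_1\ge\lambda_2>1$, $\lambda_3<0$, and $\lambda_1\lambda_2>\tfrac{n-4+\sqrt{n^2+4n-12}}{2}$), finishing by expanding $(\lambda_1+\lambda_2-\lambda_3)^2$ via the elementary symmetric functions; the equality case is then handled only by the separate computation for $\Gamma_{n,2}$ (Lemma \ref{1kank1}). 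You instead observe that once $y_2\ge 1$ is secured, the energy has the exact closed form $SE(G)=2(n-2)-2y_3$, and that $y_3$ (hence $SE$) is strictly monotone in the constant term $c=4s_2t_1(s_1+t_2)$; this collapses the whole problem to minimizing $c$ over the integer simplex, which you do with $s_2t_1\ge s_2+t_1-1$ plus concavity in $u=s_1+t_2$. What your route buys is a uniform treatment (no special case for $n=5$, since the endpoint analysis already yields $c\ge 8(n-3)>4n-8$ there), and an equality characterization that falls out automatically from strict monotonicity in $c$ together with the aligned equality conditions $s_2=t_1=1$; the underlying combinatorial bound on $s_2t_1(s_1+t_2)$ is essentially the same inequality the paper proves as $xy(n-x-y)\ge 2(n-3)$ in its Claim 1. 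The only points worth stating explicitly in a final write-up are that the three roots of $g$ are real because they are eigenvalues of the symmetric matrix $S$ shifted by $1$, and that the formula $SE=2(n-2)-2y_3$ is valid throughout the admissible range of $c$ (which your bound $c\ge 4n-8>n-1$ does guarantee).
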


\begin{proof} First we assume that $G\in \Gamma_{n,2}$. Then by Lemma \ref{1kank1}, we obtain
   $$SE(G)=n-2+\sqrt{n^2+4n-12}$$
and hence the equality holds.

\vspace*{3mm}

\begin{figure}[ht!]
\begin{center}
\begin{tikzpicture}[scale=0.8,style=thick]
\def\vr{8pt} 
\draw (0,0)  [fill=black] circle (\vr);
\draw (3,0)  [fill=black] circle (\vr);
\draw (6,0)  [fill=black] circle (\vr);
\draw (8.5,1)  [fill=black] circle (\vr);
\draw (8.5,-1)  [fill=black] circle (\vr);
\draw (4,-1.5) node {$G_1$};
\draw (0,0)-- (3,0);
\draw (3,0)-- (6,0);
\draw (6,0)-- (8.5,1);
\draw (6,0)-- (8.5,-1);
\end{tikzpicture}
\end{center}
\vspace{-0.3cm}
\caption{Graph $G_1$.}\label{fig: T-k}\vspace{-0.7cm}
 \end{figure}
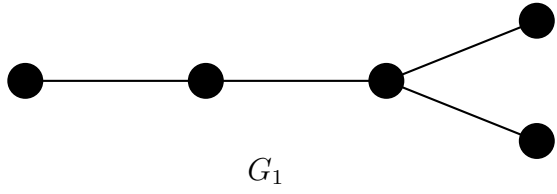

 \vspace*{8mm}

Next we assume that $G\notin \Gamma_{n,2}$. Then $2\leq s_1+t_2\leq n-3$ and $n\geq 5$. For $n=5$, $G\cong G_1$ (see, Fig. 1). One can easily obtain that
  $$Spec_S(G_1)=\left\{3,\,1.56,\,-1^2,\,-2.56\right\}.$$
Hence
$$SE(G_1)>9.12>8.75>n-2+\sqrt{n^2+4n-12}.$$

\vspace*{3mm}

\noindent
Otherwise, $n\geq 6$. Now, the characteristic polynomial of $G$ is given by
\begin{equation}
    \psi_S(x)=(x+1)^{n-3}\,\Big[(x+1)^3-n(x+1)^2+4s_2t_1(s_1+t_2)\Big].
\end{equation}
Let $\lambda_1\geq \lambda_2\geq \lambda_3$ be the roots of the following equation:
   $$f(x)=0,$$
where
\begin{equation}
f(x)=x^3-(n-3)\,x^2-(2n-3)\,x-n+1+4s_2t_1(s_1+t_2).\label{bm1}
\end{equation}
Then $\psi_S(x)=(x+1)^{n-3}\,f(x)$. Moreover, we obtain
\begin{align}
\lambda_1+\lambda_2+\lambda_3&=n-3,\label{1kcd1}\\
\lambda_1\,\lambda_2+\lambda_2\,\lambda_3+\lambda_3\,\lambda_1&=-(2n-3),\label{1kcd2}\\
\lambda_1\,\lambda_2\,\lambda_3&=n-1-4s_2t_1(s_1+t_2).\label{1kcd3}
\end{align}

\noindent
{\bf Claim 1.} $\lambda_1\geq \lambda_2>1$ and $\lambda_3<0$.

\vspace*{3mm}

\noindent
{\bf Proof of Claim 1.} Let us consider a function
     $$h(x,y)=xy\,(n-x-y),$$
where $x$ are $y$ are positive integers such that $3\leq x+y\leq n-2$ and $x\geq y\geq 1$. Since $x+y\geq 3$, we have $x\geq 2$. We have to prove that \begin{equation}
xy\,(n-x-y)\geq 2\,(n-3).\label{1kan2}
\end{equation}
First we assume that $y=1$. Then $2\leq x\leq n-3$. One can easily check that $g(x)=x\,(n-x-1)$ is an increasing function on $x\leq \frac{n-1}{2}$ and a decreasing function on $x\geq \frac{n-1}{2}$. Then $xy\,(n-x-y)=x\,(n-x-1)\geq \min\{g(2),\,g(n-3)\}=2\,(n-3)$. Hence (\ref{1kan2}) holds.

\vspace*{2mm}

Next we assume that $y\geq 2$. If $x+y<\frac{n+1}{2}$, then $xy\,(n-x-y)>2\,(n-3)$ as $x\geq 2$ and $y\geq 2$. Again (\ref{1kan2}) holds. Otherwise, $x+y\geq \frac{n+1}{2}$. Then $x\geq \frac{n+1}{4}$ as $x\geq y$. For $x+y\leq n-4$, we have
    $$xy\,(n-x-y)\geq 2\,(n+1)>2\,(n-3)~~\mbox{ as }y\geq 2~\mbox{ and }n-x-y\geq 4,$$
(\ref{1kan2}) holds. For $x+y=n-3$, we have
$$xy\,(n-x-y)=3x(n-x-3)\geq 6(n-5)\geq 2\,(n-3)$$
as $n\geq 6$,$\frac{n-3}{2}\leq x\leq n-5$. Again (\ref{1kan2}) holds. For $x+y=n-2$,
 $$xy\,(n-x-y)=2x\,(n-x-2)\geq 4(n-4)>2\,(n-3)$$
as $\frac{n-2}{2}\leq x\leq n-4$ and $n\geq 6$. Hence (\ref{1kan2}) holds.

\vspace*{3mm}

Since $s_1+s_2+t_1+t_2=n$, we obtain $s_2t_1(s_1+t_2)=s_2t_1\,(n-s_2-t_1)\geq 2\,(n-3)$, by (\ref{1kan2}). Using this result in (\ref{1kcd3}), we obtain $\lambda_1\,\lambda_2\,\lambda_3<0$ as $n\geq 6$. Therefore $\lambda_i<0$ for $1\leq i\leq 3$, or $\lambda_1\geq \lambda_2>0$ and $\lambda_3<0$. By (\ref{1kcd1}), $\lambda_i<0$ $(1\leq i\leq 3)$ is not possible. Thus we have $\lambda_1\geq \lambda_2>0$ and $\lambda_3<0$. From (\ref{bm1}), we obtain $f(x)\rightarrow +\infty$ as $x\rightarrow \infty$ and
  $$f(1)=-4n+8+4s_2t_1(s_1+t_2)=4\,\Big[-(n-2)+s_2t_1\,(n-s_2-t_1)\Big]\geq 4\,(n-4)>0$$
as $s_1+s_2+t_1+t_2=n$ and by (\ref{1kan2}). These results with (\ref{1kcd1}), we obtain $\lambda_1\geq \lambda_2>1$. This proves {\bf Claim 1}.

\vspace*{3mm}

\noindent
{\bf Claim 2.} $\lambda_1\,\lambda_2>\displaystyle{\frac{n-4+\sqrt{n^2+4n-12}}{2}}$.

\vspace*{3mm}

\noindent
{\bf Proof of Claim 2.} We consider the following two cases:

\vspace*{2mm}

\noindent
${\bf Case\,1.}$ $\lambda_3\leq -3$. We have $\lambda_1+\lambda_2=n-3-\lambda_3\geq n$. Since $f(x)=x\,(n-x)$ is an increasing function on $x\leq n/2$ and a decreasing function on $x\geq n/2$, we obtain
                    $$\lambda_1\,\lambda_2\geq (n-\lambda_2)\,\lambda_2\geq n-1>\frac{n-4+\sqrt{n^2+4n-12}}{2}$$
as $1<\lambda_2<\frac{n-3}{2}$.

\vspace*{3mm}

\noindent
${\bf Case\,2.}$ $-3<\lambda_3<0$. Since $s_1+t_2\leq n-3$, we have $3\leq s_2+t_1\leq n-2$. Since $s_2t_1(s_1+t_2)=s_2t_1\,(n-s_2-t_1)\geq 2(n-3)$, from (\ref{1kcd3}), we obtain
   $$\lambda_1\,\lambda_2\,\lambda_3=n-1-4s_2t_1(s_1+t_2)\leq -7n+23,$$
that is,
   $$\lambda_1\,\lambda_2\geq \frac{-7n+23}{\lambda_3}\geq n-1>\frac{n-4+\sqrt{n^2+4n-12}}{2}$$
as $n\geq 6$. Which proves {\bf Claim 2}.

\vspace*{3mm}

\noindent
By {\bf Claims 1} and {\bf 2} with (\ref{1kcd2}), we obtain
\begin{align*}
(|\lambda_1|+|\lambda_2|+|\lambda_3|)^2&=(\lambda_1+\lambda_2-\lambda_3)^2\\[1mm]
&=(\lambda_1+\lambda_2+\lambda_3)^2-4\,(\lambda_1\lambda_2+\lambda_2\lambda_3+\lambda_3\lambda_1)+4\,\lambda_1\lambda_2\\[1mm]
&=(n-3)^2+4\,(2n-3)+2\,[n-4+\sqrt{n^2+4n-12}]\\[1mm]
&>n^2+4n-11+2\,\sqrt{n^2+4n-12}=(\sqrt{n^2+4n-12}+1)^2,
\end{align*}
that is,
   $$|\lambda_1|+|\lambda_2|+|\lambda_3|>\sqrt{n^2+4n-12}+1.$$
Since
    $$Spec_S(G)=\Big\{\lambda_1,\,\lambda_2,\,\lambda_3,\,-1^{n-3}\Big\},$$
we obtain
$$SE(G)=n-3+|\lambda_1|+|\lambda_2|+|\lambda_3|>n-2+\sqrt{n^2+4n-12}.$$
This completes the proof of the theorem.
\end{proof}

From the motivation of the above theorem with Corollary \ref{1w5}, we state the following conjecture:
\begin{conjecture} Let $G$ be a chain graph of order $n$ with the binary string $b=0^{s_1} 1^{t_1} 0^{s_2} \ldots 0^{s_k} 1^{t_k}$ such that $\sum\limits^k_{i=1}\,(s_i+t_i)=n,\,s_i\geq 1\,(1\leq i\leq k)$, and $t_i\geq 1\,(1\leq i\leq k)$. Then $SE(G)\geq SE(H)$, where $H\in \Gamma_{n,k}$. Moreover, the equality holds if and only if $G\in \Gamma_{n,k}$.
\end{conjecture}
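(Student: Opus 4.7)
The plan is to prove the conjecture in two stages: (a) compute a closed-form expression for $SE(H)$ when $H\in\Gamma_{n,k}$, and (b) show that any chain graph of order $n$ with $|\pi|=2k$ can be transformed into an element of $\Gamma_{n,k}$ by a sequence of local swaps that do not increase the Seidel energy. Throughout, I would use the identity from the proof of Theorem~\ref{Chain_Energy_Th1} that $SE(G)=(n-2k)+\sum_{i=1}^{2k}|\lambda_i(Q_S)|$, so only the $2k$ eigenvalues of $Q_S$ need to be tracked. Since $\operatorname{tr} Q_S=n-2k$ and $\operatorname{tr} Q_S^2=n^2-2n+2k$ depend only on $n$ and $k$ (Lemma~\ref{Chain_Quotient_Lm1}), the dependence of $SE(G)$ on the tuple $(s_1,t_1,\ldots,s_k,t_k)$ is carried entirely by the higher moments $\operatorname{tr} Q_S^r$ for $r\geq 3$, equivalently by the lower-order coefficients of the degree-$(2k-1)$ factor of the characteristic polynomial given in Theorem~\ref{Chain_cp_th1}.

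For step~(a), fix the representative $H\in\Gamma_{n,k}$ with $s_1=n-2k+1$ and $t_k=1$; by Corollary~\ref{1w5} the resulting $SE(H)$ is independent of this choice. Then the tridiagonal matrices $U_{x+1}$ and $V_{x+1}$ of Theorem~\ref{Chain_cp_th1} have every diagonal entry equal to $2$ (with off-diagonals $x+1$) except the bottom-right entry $2(s_1+t_k)=2(n-2k+2)$ of $V_{x+1}$. Scaling by $1/(x+1)$ reduces the computation of both determinants to the uniform-tridiagonal formula~(\ref{Chain_cp_eq6}) with parameter $c=2/(x+1)$, together with one cofactor expansion at the exceptional corner. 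Plugging back into Theorem~\ref{Chain_cp_th1} should yield a factorization
\[
\psi^{H}_{S}(x)=(x+1)^{n-2k+1}(x-1)\,q_{n,k}(x),
\]
where $q_{n,k}$ is an explicit polynomial of degree $2k-2$ depending only on $n$ and $k$, generalizing the factorization in Lemma~\ref{1kank1}. This gives a closed expression for $SE(H)$ in terms of the roots of $q_{n,k}$.

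For step~(b), I would define a local swap: given $G$ with some interior block of size at least two (i.e.\ $s_i\geq 2$ for $2\leq i\leq k$ or $t_i\geq 2$ for $1\leq i\leq k-1$), move a single vertex so that the resulting chain graph $G'$ has strictly more singleton interior blocks while preserving $n$ and $|\pi|=2k$; iterating such swaps terminates in $\Gamma_{n,k}$. To compare energies, I would write $S(G)=S(G')+R$ with $R$ a symmetric difference matrix supported on the moved vertex and the vertices whose adjacency with it flipped, so that $\operatorname{rank} R$ is bounded in terms of $k$ alone, and apply Lemma~\ref{energy_Lemma_Fan}; the sign-controlled bound needed to conclude $SE(G)\geq SE(G')$ should come from an explicit evaluation of the change in $\operatorname{tr} Q_S^3$ under the swap, since with the first two moments fixed this is precisely the quantity that governs differences in $\sum|\lambda_i|$.

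The main obstacle is this sign-controlled monotonicity: $\sum|\lambda_i|$ is not a smooth symmetric function of the eigenvalues, and the subadditivity supplied by Lemma~\ref{energy_Lemma_Fan} only gives an upper bound in the wrong direction, so extra work is needed to extract a lower bound. A natural backup, should the direct swap argument resist analysis, is induction on $k$: use the tridiagonal structure in Theorem~\ref{Chain_cp_th1} to peel off the last $t_k$-block and relate $\psi_{S}^{G}$ to the characteristic polynomial of a smaller chain graph, apply the conjectured inequality at $k-1$, and reduce to a one-parameter inequality in $s_1$ and $t_k$ of the type that drove the proof of Theorem~\ref{1kank2}. Either route places the technical core of the argument in quantifying how $\sum|\lambda_i|$ responds to changes in the number of interior blocks of size greater than one, and the equality case $G\in\Gamma_{n,k}$ is precisely what one expects when this count is minimized.
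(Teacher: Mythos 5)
You should first note that the paper does not prove this statement at all: it is posed as an open conjecture, motivated by the $k=2$ case, which is the only case actually proved (Theorem \ref{1kank2}, via an explicit root-location analysis of the cubic factor of $\psi_S$). So the relevant question is whether your proposal closes the conjecture, and it does not. Step (a) is plausible and essentially parallels Lemma \ref{1kank1} (though the claimed factor $(x-1)$ of $\psi_S^H$ for general $k$ is an unverified guess; it is checked in the paper only for $k=2$). The fatal gap is step (b). As you yourself observe, Lemma \ref{energy_Lemma_Fan} applied to $S(G)=S(G')+R$ yields $SE(G)\leq SE(G')+E(R)$, an inequality in the wrong direction, so it cannot establish $SE(G)\geq SE(G')$. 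The fallback claim — that with $\operatorname{tr} Q_S$ and $\operatorname{tr} Q_S^2$ fixed (Lemma \ref{Chain_Quotient_Lm1}) the sign of the change in $\sum_i|\lambda_i|$ is ``precisely governed'' by the change in $\operatorname{tr} Q_S^3$ — is not a theorem and is false in general: $\sum_i|\lambda_i|$ is not a function of any finite collection of power sums, and for $2k\geq 4$ one can exhibit pairs of real spectra with identical first and second moments whose third moments and whose sums of absolute values are ordered oppositely. Nothing in the proposal replaces this with an actual monotonicity argument, so the technical core of the conjecture remains untouched.

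The backup route (induction on $k$ by peeling the last block off the tridiagonal matrices of Theorem \ref{Chain_cp_th1}) has the same difficulty in a different guise: deleting a block does not send $Q_S$ of $G$ to $Q_S$ of a chain graph on fewer vertices with the same $n$, so the inductive hypothesis does not directly apply, and even if an interlacing-type relation between the two characteristic polynomials were established, interlacing alone does not control $\sum_i|\lambda_i|$ from below. In short, your outline correctly identifies where the difficulty lies (quantifying how $\sum_i|\lambda_i|$ responds to redistributing block sizes at fixed $n$ and $k$), but it does not supply the missing idea; the statement remains a conjecture, consistent with how the paper itself presents it, with only the $k=2$ case (and the constancy of $SE$ on $\Gamma_{n,k}$, Corollary \ref{1w5}) actually established.
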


\section*{Acknowledgements}
S. Mandal thanks to University Grants Commission, India for financial support under the beneficiary code BININ01569755. K. C. Das is supported by National Research Foundation funded by the Korean government (Grant No. 2021R1F1A1050646).



\begin{thebibliography}{99}
\bibitem{Akbari 1}
S. Akbari, M. Einollahzadeh, M. M. Karkhaneei, M. A. Nematollahi, \textit{Proof of a conjecture on the Seidel energy of Graphs}, European J. Combin. 86 (2020) 103078, 8 pp.


\bibitem{Akbari 2}
S. Akbari, J. Aksari, K. C. Das, \textit{Some properties of eigenvalues of the Seidel matrix}, Linear  Multilinear Algebra (2020), in press. DOI: doi.org/10.1080/03081087.2020.1790481

\bibitem{AAD} A. Alazemi, M. An\dj eli\'c, K. C. Das, C. M. D. Fonseca, Chain graph sequences and Laplacian spectra of chain graphs, Linear  Multilinear Algebra (2022), in press.

\bibitem{Andelic 1}
A. Alazemi, M. An\dj eli\'c, S. K. Simi\'c, \textit{Eigenvalue location for chain graphs}, Linear Algebra Appl. 505 (2016) 194--210.

\bibitem{Andelic 2}
M. An\dj eli\'c, S. K. Simi\'c, D. Zivkovi\'c, E. Dolicanin, \textit{Fast algorithm for computing the characteristic polynomial of threshold and  chain graphs}, Appl. Math. Comput. 332 (2018) 329--337.

\bibitem{AID} J. Askari, A. Iranmanesh, K. C. Das, \textit{Seidel-Estrada Index}, Jour. Ineq. Appl. (2016) 2016: 120.

\bibitem{Bell 1}
F. K. Bell, D. Cvetkovi\'c, P. Rowlinson, S. K. Simi\'c, \textit{Graphs for which least eigenvalue is minimal, I}, Linear Algebra Appl. 429 (2008) 234--241.

\bibitem{Bell 2}
F. K. Bell, D. Cvetkovi\'c, P. Rowlinson, S. K. Simi\'c, \textit{Graphs for which least eigenvalue is minimal, II}, Linear Algebra Appl. 429 (2008) 2168--2179.

\bibitem{Berman}
A. Berman, N. S. Monderer, R. Singh, X. D. Zhang, \textit{Complete multipartite graphs that are determined, up to switching, by their Seidel spectrum}, Linear Algebra Appl. 564 (2019) 58--71.

\bibitem{Bhattacharya}
A. Bhattacharya, S. Friedland, U. N. Peled, \textit{On the first eigenvalue of bipartite graphs}, Electron. J. Combin. 15 (1) (2008) \#144, 23pp.

\bibitem{Biler}
P. Biler, A. Witkowski, \textit{Problems in Mathematical Analysis}, Chapman and Hall, New York, 1990.

\bibitem{Brouwer}
A. E. Brouwer, W. H. Haemers, \textit{Spectra of Graphs}, Springer, New York, 2012.


\bibitem{Andelic 3}
K. C. Das, A. Alazemi, M. An\dj eli\'c, \textit{On energy and Laplacian energy of  chain graphs}, Discrete Appl. Math. 284 (2020) 391--400.

\bibitem{Das 1}
K. C. Das, I. Gutman, \textit{Bounds for the energy of graphs}, Hacet. J. Math. Stat. 45 (3) (2016) 695--703.

\bibitem{Fan}
K. Fan, \textit{Maximum properties and inequalities for the eigenvalues of completely continuous operators}, Proc. Nat. Acad. Sci. USA. 37 (1951) 760--766.

\bibitem{Ghorbani 1}
E. Ghorbani, \textit{On eigenvalues of Seidel matrices and Haemers' conjecture}, Des. Codes Cryptogr. 84 (1-2) (2017) 189--195.

\bibitem{Godsil}
C. Godsil, G. Royle, \textit{Algebraic Graph Theory}, Springer-Verlag, New York, 2001.

\bibitem{Haemers}
W. H. Haemers, \textit{Seidel Switching and Graph Energy}, MATCH Commun. Math. Comput. Chem. 68 (2012) 653--659.

\bibitem{Aksari}
A. Iranmanesh, J. A. Farsangi, \textit{Upper and lower bounds for the power of eigenvalues in Seidel matrix}, J. Appl. Math. Inform. 33 (5-6) (2015) 627--633. DOI: dx.doi.org/10.14317/jami.2015.627

\bibitem{Mandal}
S. Mandal, R. Mehatari, \textit{On the Seidel spectrum of threshold graphs}, (2021). Preprint: arXiv:2101.03364.

\bibitem{Mei}
Y. Mei, C. Guo, M. Liu, \textit{The bounds of the energy and Laplacian energy of chain graphs}, AIMS Math. 6 (5) (2021) 4847--4859. DOI: 10.3934/math.2021284







\bibitem{Oboudi 1}
M. R. Oboudi, \textit{Energy and Seidel energy of graphs}, MATCH Commun. Math. Comput. Chem. 75 (2016) 291--303.

\bibitem{Oboudi 2}
M. R. Oboudi, \textit{Seidel energy of complete multipartite graphs}, Spec. Matrices 9 (2021) 212--216. DOI: doi.org/10.1515/spma-2020-0131.

\bibitem{Qi}
F. Qi, V.  \v{C}er\v{n}anov\'a, Y. S. Semenov, S\textit{ome tridiagonal determinants related to central Delannoy numbers, the Chebyshev polynomials, and the Fibonacci polynomials}, Politehn. Univ. Bucharest Sci. Bull. Ser. A Appl. Math. Phys. 81 (2019) 123--136.

\bibitem{Ramane}
H. S. Ramane, K. Ashoka, D. Patil, \textit{On the Seidel Laplacian and Seidel signless Laplacian polynomials of graphs}, Kyungpook Math. J. 61 (1) (2021) 155--168. DOI: doi.org/10.5666/KMJ.2021.61.1.155


\bibitem{Styan}
H. Wolkowicz, G. P. H. Styan, \textit{Bounds for the eigenvalues using traces}, Linear Algebra Appl. 29 (1980) 471--506.





\end{thebibliography}
\end{document}